
\documentclass[12pt]{amsart}

\voffset=-1.4mm
\oddsidemargin=17pt \evensidemargin=17pt
\headheight=9pt     \topmargin=26pt
\textheight=576pt   \textwidth=440.8pt
\parskip=0pt plus 4pt

\usepackage{amssymb,latexsym}
\usepackage{bm}
\usepackage{mathrsfs}
\usepackage{graphicx}
\usepackage{psfrag}
\usepackage{wrapfig}
\usepackage{color}
\usepackage{enumitem}
\setlist[enumerate]{parsep=0pt plus 4pt,topsep=0pt plus 4pt}
\usepackage{url}
\usepackage{hyperref}
\usepackage{tikz}
\definecolor{darkblue}{RGB}{0,0,160}
\hypersetup{colorlinks,citecolor=black,filecolor=black,%
            linkcolor=darkblue,urlcolor=darkblue}
\allowdisplaybreaks

\newcommand{\excise}[1]{}

\newtheorem{thm}{Theorem}[section]
\newtheorem{lemma}[thm]{Lemma}

\newtheorem{cor}[thm]{Corollary}
\newtheorem{prop}[thm]{Proposition}

\newtheorem{hyp}[thm]{Hypothesis}

\theoremstyle{definition}

\newtheorem{remark}[thm]{Remark}
\newtheorem{defn}[thm]{Definition}

\numberwithin{equation}{section}



\newcounter{separated}


\newcommand\DD{\mathscr{D}}
\newcommand\EE{\mathscr{E}}
\newcommand\FF{\mathscr{F}}
\newcommand\GG{\mathscr{G}}

\newcommand\OO{\mathcal{O}}

\newcommand\RR{{\mathbb R}}
\newcommand\UU{\mathscr{U}}
\newcommand\ZZ{{\mathbb Z}}

\newcommand\ii{{\mathbf i}}
\newcommand\jj{{\mathbf j}}
\newcommand\kk{\Bbbk}


\newcommand\cN{N}


\newcommand\oD{\hspace{.3ex}\ol{\hspace{-.3ex}D\hspace{-.05ex}}\hspace{.05ex}}

\renewcommand\phi{\varphi}


\newcommand\ale{{\mathord{\mathrm{ale}}}}

\newcommand\con{{\mathord{\mathrm{con}}}}


\newcommand\ord{{\mathord{\mathrm{ord}}}}

\newcommand\from{\leftarrow}

\newcommand\onto{\twoheadrightarrow}

\newcommand\spot{{\hbox{\raisebox{1pt}{\tiny$\scriptscriptstyle\bullet$}}}}

\newcommand\simto{\mathrel{\!\ooalign{$\fillrightmap$\cr\raisebox{.75ex}{$\,\sim\ \hspace{.2ex}$}}}}

\newcommand\fillrightmap{\mathord- \mkern-6mu
	\cleaders\hbox{$\mkern-2mu \mathord- \mkern-2mu$}\hfill
	\mkern-6mu \mathord\rightarrow}

\renewcommand\iff{\Leftrightarrow}
\renewcommand\epsilon{\varepsilon}
\renewcommand\implies{\Rightarrow}


\newcommand\dx[1][]{\delta^{\hspace{.1ex}\xi}}

\newcommand\ol[1]{{\overline{#1}}}





 %
 %

\definecolor{lightred}{rgb}{1,.3,.3}









\begin{document}

\title[Stratifications of vector spaces from conical microsupport]%
      {Stratifications of real vector spaces from\\%
	constructible sheaves with conical microsupport}
\author{Ezra Miller}
\address{Mathematics Department\\Duke University\\Durham, NC 27708}
\urladdr{\url{http://math.duke.edu/people/ezra-miller}}

\makeatletter
  \@namedef{subjclassname@2020}{\textup{2020} Mathematics Subject Classification}
\makeatother
\subjclass[2020]{Primary: 32S60, 32B20, 14F07, 55N31, 62R40, 32B25,
13D02, 13E99, 52B99, 06F20, 06F05, 20M25, 05E40, 13A02, 13P20, 68W30,
13P25, 68T09;
Secondary: 06A11, 06A07, 14P10, 13C99, 05E16, 62R01, 13F99, 06B35,
20M14, 22A25}

\date{22 January 2023}

\begin{abstract}
Interpreting the syzygy theorem for tame modules over posets in the
setting of derived categories of subanalytically constructible sheaves
proves two conjectures due to Kashiwara and Schapira concerning the
existence of stratifications of real vector spaces that play well with
sheaves having microsupport in a given cone or, equivalently, sheaves
in the corresponding conic topology.
\end{abstract}
\maketitle

\setcounter{tocdepth}{2}
\tableofcontents

\section{Introduction}\label{s:intro}

\setcounter{tocdepth}{-1}
\subsection*{Overview and motivation}\label{sub:overview}
\setcounter{tocdepth}{2}

Persistent homology with multiple real parameters can be phrased in
more or less equivalent ways using multigraded modules
(e.g.~\cite{multiparamPH,knudson2008,hom-alg-poset-mods}), or sheaves
(e.g.~\cite{curry-thesis,curry2019}), or functors
(e.g.~\cite{scolamiero-chacholski-lundman-ramanujam-oberg16}), or
derived categories (e.g.~\cite{kashiwara-schapira2018}).  All of these
descriptions have in common an underlying partially ordered set
indexing a family of vector spaces, and this is interpreted under
increasing layers of abstraction.

The simplest objects at any level of abstraction are the \emph{indicator}
objects, which place a single copy of the ground field~$\kk$
at every point of an interval in the underlying poset~$Q$, meaning an
intersection of an upset of~$Q$ with a downset of~$Q$.  (The
terminology is most clear when $Q = \RR$, where ``interval'' has its
usual meaning.)  Among the indicator objects are those supported on
the upsets and downsets themselves; over $Q = \RR$ these objects are
free and injective, respectively.  Furthermore, when $Q$ is totally
ordered, every object is a direct sum of indicator objects
\cite{crawley-boevey2015}.  The theory for more general posets,
including partially ordered real vector spaces, has in large part
revolved around relating general objects as closely as possible to
indicator objects, particularly where algorithmic computation is
concerned.

Indeed, the foundations for the ideas in this paper, both from
Kashiwara--Schapira \cite{kashiwara-schapira2018,
kashiwara-schapira2019} and the author \cite{hom-alg-poset-mods,
prim-decomp-pogroup, essential-real} (see also \cite{qr-codes}), lies
in algorithmic computation with persistent homology.  To that end,
effective methods demand concrete representatives of derived sheaves
and stratifications of their support.  Kashiwara and Schapira, in
\cite[Conjecture~3.20]{kashiwara-schapira2019} (which they had
previously stated as \cite[Conjecture~4.19]{kashiwara-schapira2018})
and \cite[Conjecture\,3.17]{kashiwara-schapira2017}, assert that
derived sheaves in principle possess such concrete representatives.
Corollaries~\ref{c:PL} and~\ref{c:strat} achieve more than mere
existence: the engine behind their proofs, Theorem~\ref{t:res},
produces concrete structures via the syzygy theorem for poset modules
(Theorem~\ref{t:syzygy-complexes}), which is specifically designed to
extract resolutions algorithmically~\cite{hom-alg-poset-mods}.

Situations abound where concrete resolutions could be essential for
algorithmic persistent homology.  For example, bifiltration of a
semialgebraic space by two semialgebraic functions yields bipersistent
homology that is an $\RR^2$-module which should be tame and hence have
finite indicator resolutions by upsets or downsets.  (This statement
requires proof and might be subtle or even false in the subanalytic
instead of semialgebraic context.)  This scenario is fundamental to
motivating applications such as summarizing shape in biology
\cite{fruitFlyModuli} or probability distributions in statistics
\cite{rolle-scoccola2020}.  Tameness in this biparameter setting
connects to recent Morse-theoretic stratification perspectives by
Budney and Kaczynski \cite{budney-kaczynski2021} as well as by Assif
and~Baryshnikov~\cite{assif-baryshnikov2021}.

\setcounter{tocdepth}{-1}
\subsection*{Conjectures and proofs}\label{sub:conjectures}
\setcounter{tocdepth}{2}

The conjectures of Kashiwara and Schapira are phrased in the most
abstract derived setting.  They posit, roughly speaking, that every
object can be directly related to indicator objects, either by
stratification of its support or---more strongly---by resolution.
More precisely, the first conjecture concerns the relation between, on
one hand, constructibility of sheaves on real vector spaces in the
derived category with microsupport restricted to a cone, and on the
other hand, stratification of the vector space in a manner compatible
with the cone \cite[Conjecture~3.17]{kashiwara-schapira2017}
	\footnote{Bibliographic note: this conjecture appears in~v3
	(the version cited here) and earlier versions of the cited
	arXiv preprint.  It does not appear in the published version
	\cite{kashiwara-schapira2018}, which is~v6 on the arXiv.  The
	published version is cited where it is possible to do so, and
	v3 \cite{kashiwara-schapira2017} is cited otherwise.}
(Corollary~\ref{c:strat}).  The second concerns piecewise linear (PL)
objects in this context, particularly existence of polyhedrally
structured resolutions that, in principle, lend themselves to explicit
or algorithmic computation
\cite[Conjecture~4.19]{kashiwara-schapira2018} =
\cite[Conjecture~3.20]{kashiwara-schapira2019}~(\mbox{Corollary}~\ref{c:PL}).

This note uses the most elementary poset module setting
\cite{hom-alg-poset-mods} to prove these conjectures.  Both follow
immediately from Theorem~\ref{t:res} here, which translates the
relevant real-vector-space special cases of the syzygy theorem for
complexes of poset modules \cite[Theorem~6.17]{hom-alg-poset-mods}
(reviewed in Section~\ref{s:syzygy}) into the language of derived
categories of constructible sheaves with conic microsupport or under a
conic~topology (reviewed in Section~\ref{s:cones}).

The syzygy theorem \cite[Theorems~6.12 and~6.17]{hom-alg-poset-mods}
leverages relatively weak topological framework into powerful
homological structure: over any poset~$Q$ it enhances a constant
subdivision---a partition of~$Q$ into finitely many regions over which
the given module or complex is constant---to a more controlled
subdivision (a~finite encoding \cite[\S4]{hom-alg-poset-mods}), and
even to a finite resolution by upset modules and a finite resolution
by downset modules, whose pieces play well with the ambient
combinatorics.  These resolutions are analogues over arbitrary posets
of free and injective resolutions for modules over the poset~$\ZZ^n$
\cite{goto-watanabe1978} (see \cite{injAlg} or \cite[Chapter~11]{cca}
for background, or \cite[\S5]{hom-alg-poset-mods} for a treatment in
the present context) or over the poset~$\RR$.  Crucially, any
available supplementary geometry---be it subanalytic, semialgebraic,
or piecewise-linear, for instance---is preserved.

In the context of a partially ordered real vector space~$Q$ with
positive cone~$Q_+$, the enhancement afforded by the syzygy theorem
produces a $Q_+$-stratification from an arbitrary subanalytic
triangulation.  If the triangulation is subordinate to a given
constructible derived $Q_+$-sheaf, meaning an object in the bounded
derived category of constructible sheaves with microsupport contained
in the negative polar cone of~$Q_+$, then this enhancement produces
$Q_+$-structured resolutions of the given sheaf.  This makes the two
conjectures into special cases of the syzygy theorem.

While sheaves with conical microsupport (see
Section~\ref{sub:microsupport}) are equivalent to the more elementary
sheaves in the conic topology (see Section~\ref{sub:topology}), as has
been known from the outset \cite{kashiwara-schapira1990}
(see Theorem~\ref{t:microsupport}), the notion of constructibility has
until now been available only on the microsupport side.  The results
here assert that constructibility can be detected entirely with the
more rigid conic topology, via tameness, without appealing to
subanalytic triangulations in the more flexible analytic topology, a
point emphasized by Theorem~\ref{t:res'}.  More broadly, for
applications in persistent homology the input is usually a sheaf in
the conic topology induced by a given cone~$Q_+$ instead of a sheaf in
the ordinary topology with microsupport in the negative polar
cone~$Q_+^\vee$, so the main results, namely Theorem~\ref{t:res},
Corollary~\ref{c:PL}, and Corollary~\ref{c:strat}, are restated using
conic sheaves in Theorem~\ref{t:res'}, Corollary~\ref{c:PL'}, and
Corollary~\ref{c:strat'}.

\setcounter{tocdepth}{-1}
\subsection*{Poset modules vs.~constructible sheaves}\label{sub:comparison}
\setcounter{tocdepth}{2}

The theory in \cite{hom-alg-poset-mods, prim-decomp-pogroup,
essential-real} was developed simultaneously and independently from
\cite{kashiwara-schapira2018, kashiwara-schapira2019}
(cf.~\cite{qr-codes}).  Having made the connection between these
approaches, it is worth comparing them~in~\mbox{detail}.$\!$

The syzygy theorem \cite[Theorems~6.12 and~6.17]{hom-alg-poset-mods}
and its combinatorial underpinnings involving poset encoding
\cite[\S4]{hom-alg-poset-mods} hold over arbitrary posets; see
Section~\ref{s:syzygy} here for indications toward this generality.
When the poset is a real vector space, the constructibility
encapsulated by topological tameness (Definition~\ref{d:tame}) has no
subanalytic, algebraic, or piecewise-linear hypothesis, although these
additional structures are preserved by the syzygy theorem transitions.
For example, the upper boundary of a downset in the plane with the
usual componentwise partial order could be the graph of any continuous
weakly decreasing function, among other things, and could be present
(i.e.,~the downset is closed) or absent (i.e.,~the downset is open),
or somewhere in between (e.g., a Cantor set could be missing).  The
conic topology in \cite{kashiwara-schapira2018} or
\cite{kashiwara-schapira2019} specializes at the outset to the case of
a partially ordered real vector space, and it allows only subanalytic
or polyhedral regions, respectively, with upsets having closed lower
boundaries and downsets having open upper boundaries.  The
constructibility in
\cite{kashiwara-schapira2018,kashiwara-schapira2019} is otherwise
essentially the same as tameness here (Theorem~\ref{t:res'}), except
that tameness requires constant subdivisions to be finite, whereas
constructibility in the derived category allows constant subdivisions
to be locally finite.  That said, this agreement of constructibility
with locally finite tameness that is subanalytic or PL, more or less
up to boundary considerations, is visible in
\cite{kashiwara-schapira2017} or \cite{kashiwara-schapira2019} only
via conjectures, namely the ones proved here in Section~\ref{s:strat}
using the general poset methods.

The theory of primary decomposition in \cite{prim-decomp-pogroup}
requires the poset to be a partially ordered group whose positive cone
has finitely many faces.  These can be integer or real or something in
between, but the finiteness is essential for primary decomposition in
any of these settings; see \cite[Example~5.9]{prim-decomp-pogroup}.
Local finiteness allowed by constructibility in
\cite{kashiwara-schapira2018} does not provide a remedy, although it
is possible that the PL hypothesis in \cite{kashiwara-schapira2019}
does.  In either the integer or real case, detailed understanding of
the topology results in a stronger theory of primary decomposition
than over an aribtrary polyhedral group, with much more complete
supporting commutative algebra~\cite{essential-real}.

Most of the remaining differences between the developments in
\cite{hom-alg-poset-mods,prim-decomp-pogroup,essential-real} and
those in \cite{kashiwara-schapira2018,kashiwara-schapira2019}, beyond
the types of allowed functions and the shapes of allowed regions, is
the behavior allowed on boundaries of regions.  That difference is
accounted for by the transition between the conic topology and the
Alexandrov topology, the distinction being that the Alexandrov
topology has for its open sets all upsets, whereas the conic topology
has only the upsets that are open in the usual topology.  This
distinction is explored in detail by Berkouk and Petit
\cite{berkouk-petit2019}.  It is intriguing that ephemeral modules are
undetectable metrically \cite[Theorem~4.22]{berkouk-petit2019} but
their presence here brings indispensable insight into homological
behavior in the~conic~\mbox{topology}.

\setcounter{tocdepth}{-1}
\subsection*{Acknowledgements}\label{sub:acknowledgements}
\setcounter{tocdepth}{2}

Pierre Schapira gave helpful comments on a draft of this paper, as did
a referee.  Portions of this work were funded by NSF
grant~DMS-1702395.

\section{Syzygy theorem for poset modules}\label{s:syzygy}

This section recalls concepts surrounding modules over posets,
concluding with a statement (Theorem~\ref{t:syzygy-complexes}) of the
relevant special case of the syzygy theorem for complexes of poset
modules \cite[Theorem~6.17]{hom-alg-poset-mods}.  For reference, the
definitions here correspond to
\cite[Definitions~2.1,
2.6,
2.11,
2.14,
2.15,
4.27,
3.1,
3.14,
6.1,
and 6.16]{hom-alg-poset-mods},
%
sometimes special cases thereof.

\subsection{Tame poset modules}\label{sub:tame}

\begin{defn}\label{d:poset-module}
Let $Q$ be a partially ordered set (\emph{poset}) and~$\preceq$ its
partial order.  A \emph{module over~$Q$} (or a \emph{$Q$-module})
is
\begin{itemize}
\item%
a $Q$-graded vector space $M = \bigoplus_{q\in Q} M_q$ with
\item%
a homomorphism $M_q \to M_{q'}$ whenever $q \preceq q'$ in~$Q$
such that
\item%
$M_q \to M_{q''}$ equals the composite $M_q \to M_{q'} \to
M_{q''}$ whenever $q \preceq q' \preceq q''$.
\end{itemize}
A \emph{homomorphism} $M \to \cN$ of $Q$-modules is a
degree-preserving linear map, or equivalently a collection of vector
space homomorphisms $M_q \to \cN_q$, that commute with the structure
homomorphisms $M_q \to M_{q'}$ and $\cN_q \to \cN_{q'}$.
\end{defn}

\begin{defn}\label{d:constant-subdivision}
Fix a $Q$-module~$M$.  A \emph{constant subdivision} of~$Q$
\emph{subordinate} to~$M$ is a partition of~$Q$ into
\emph{constant regions} such that for each constant region~$I$ there
is a single vector space~$M_I$ with an isomorphism $M_I \to M_\ii$ for
all $\ii \in I$ that \emph{has no monodromy}: if $J$ is some (perhaps
different) constant region, then all comparable pairs $\ii \preceq
\jj$ with $\ii \in I$ and $\jj \in J$ induce the same composite
homomorphism $M_I \to M_\ii \to M_\jj \to M_J$.
\end{defn}

\begin{defn}\label{d:tame}
Fix a poset~$Q$ and a $Q$-module~$M$.
\mbox{}
\begin{enumerate}
\item\label{i:finite-constant-subdiv}%
A constant subdivision of~$Q$ is \emph{finite} if it has finitely
many constant regions.

\item\label{i:Q-finite}%
The $Q$-module~$M$ is \emph{$Q$-finite} if its components $M_q$
have finite dimension over~$\kk$.

\item\label{i:tame}%
The $Q$-module~$M$ is \emph{tame} if it is $Q$-finite and $Q$
admits a finite constant subdivision subordinate to~$M$.
\end{enumerate}
\end{defn}

\subsection{Real partially ordered groups}\label{sub:real}

\begin{defn}\label{d:pogroup}
An abelian group~$Q$ is \emph{partially ordered} if it is generated
by a submonoid~$Q_+$, called the \emph{positive cone}, that has
trivial unit group.  The partial order is: $q \preceq q' \iff q' - q
\in Q_+$.  A partially ordered group is
\begin{enumerate}
\item\label{i:real}%
\emph{real} if the underlying abelian group is a real vector space of
finite dimension;
\item\label{i:subanalytic'}%
\emph{subanalytic} if, in addition, $Q_+$ is subanalytic,
(see \cite[\S8.2]{kashiwara-schapira1990} for the definition);

\item\label{i:polyhedral}%
\emph{polyhedral} if, in addition, $Q_+$ is a \emph{convex
polyhedron}: an intersection of finitely many half-spaces, each either
closed or open.
\end{enumerate}
\end{defn}

\begin{defn}\label{d:auxiliary-hypotheses}
A partition of a real partially ordered group~$Q$ into subsets is
\begin{enumerate}
\item\label{i:subanalytic}%
\emph{subanalytic} if the subsets are subanalytic sets, and

\item\label{i:PL}%
\emph{piecewise linear (PL)} if the subsets are finite unions of
convex polyhedra.
\end{enumerate}
A module over a subanalytic or polyhedral real partially ordered
group~$Q$ is \emph{subanalytic} or \emph{PL}, respectively, if the
module is tamed by a subordinate finite constant subdivision of the
corresponding type.
\end{defn}

\subsection{Complexes and resolutions of poset modules}\label{sub:complexes}

\begin{defn}\label{d:tame-morphism}
A homomorphism $\phi: M \to N$ of $Q$-modules is \emph{tame} if $Q$
admits a finite constant subdivision subordinate to both $M$ and~$N$
such that for each constant region~$I$ the composite homomorphism $M_I
\to M_\ii \to N_\ii \to N_I$ does not depend~on~$\ii \in I$.  The map
$\phi$ is subanalytic or PL if this constant subdivision~is.
\end{defn}

\begin{defn}\label{d:indicator}
The vector space $\kk[Q] = \bigoplus_{q\in Q} \kk$ that assigns $\kk$
to every point of the poset~$Q$ is a $Q$-module with identity maps
on~$\kk$.  More generally,
\begin{enumerate}
\item\label{i:upset}%
an \emph{upset} (also called a \emph{dual order ideal}) $U \subseteq
Q$, meaning a subset closed \mbox{under} going upward in~$Q$ (so $U +
Q_+ = U$, when $Q$ is a partially ordered group) determines an
\emph{indicator submodule} or \emph{upset module} $\kk[U] \subseteq
\kk[Q]$; and
\item\label{i:downset}%
dually, a \emph{downset} (also called an \emph{order ideal}) $D
\subseteq Q$, meaning a subset closed under going downward in~$Q$ (so
$D - Q_+ = D$, when $Q$ is a partially ordered group) determines an
\emph{indicator quotient module} or \emph{downset module} $\kk[Q]
\onto \kk[D]$.
\end{enumerate}
When $Q$ is a subposet of a partially ordered real vector space, an
indicator module of either sort is subanalytic or PL if the
corresponding upset or downset is of the~same~type.
\end{defn}

\begin{defn}\label{d:connected-homomorphism}
Let each of $S$ and $S'$ be a nonempty intersection of an upset in a
poset~$Q$ with a downset in~$Q$, so $\kk[S]$ and $\kk[S']$ are
subquotients of~$\kk[Q]$.  A homomorphism $\phi: \kk[S] \to \kk[S']$
is \emph{connected} if there is a scalar $\lambda \in \kk$ such that
$\phi$ acts as multiplication by~$\lambda$ on the copy of~$\kk$ in
degree $q$ for all $q \in S \cap S'$.
\end{defn}

\begin{defn}\label{d:resolutions}
Fix any poset~$Q$ and a $Q$-module~$M$.
\begin{enumerate}
\item%
An \emph{upset resolution} of~$M$ is a complex~$F_\spot$ of
$Q$-modules, each a direct sum of upset submodules of~$\kk[Q]$, whose
differential $F_i \to F_{i-1}$ decreases homological degrees, has
components $\kk[U] \to \kk[U']$ that are connected, and has only one
nonzero homology $H_0(F_\spot) \cong M$.

\item%
A \emph{downset resolution} of~$M$ is a complex~$E^\spot$ of
$Q$-modules, each a direct sum of downset quotient modules
of~$\kk[Q]$, whose differential $E^i \to E^{i+1}$ increases
cohomological degrees, has components $\kk[D'] \to \kk[D]$ that are
connected, and has only one nonzero cohomology $H^0(E^\spot)
\cong\nolinebreak M$.
\end{enumerate}\setcounter{separated}{\value{enumi}}
An upset or downset resolution is called an \emph{indicator
resolution} if the up- or down- nature is unspecified.  The
\emph{length} of an indicator resolution is the largest
(co)homological degree in which the complex is nonzero.  An indicator
resolution
\begin{enumerate}\setcounter{enumi}{\value{separated}}
\item%
is \emph{finite} if the number of indicator module summands is finite,

\item%
\emph{dominates} a constant subdivision of~$M$ if the subdivision or
encoding is subordinate to each indicator summand, and

\item\label{i:auxiliary-resolution}%
is \emph{subanalytic} or \emph{PL} if $Q$ is a subposet of a real
partially ordered group and the resolution dominates a constant
subdivision of the corresponding type.
\end{enumerate}
\end{defn}

\begin{defn}\label{d:tame-complex}
Fix a complex $M^\spot$ of modules over a poset~$Q$.
\begin{enumerate}
\item%
$M^\spot$ is \emph{tame} if its modules and morphisms are tame
(Definitions~\ref{d:tame} and~\ref{d:tame-morphism}).

\item%
A constant subdivision is \emph{subordinate} to~$M^\spot$ if it is
subordinate to all of the modules and morphisms therein, and then
$M^\spot$ is said to \emph{dominate} the subdivision.

\item%
An \emph{upset resolution} of~$M^\spot$ is a complex of $Q$-modules in
which each $F^i$ is a direct sum of upset modules and the components
$\kk[U] \to \kk[U']$ are connected, with a homomorphism $F^\spot \to
M^\spot$ of complexes inducing an isomorphism on homology.

\item%
A \emph{downset resolution} of~$M^\spot$ is a complex of
$Q$-modules in which each $E^i$ is a direct sum of downset modules
and the components $\kk[D] \to \kk[D']$ are connected, with a
homomorphism $M^\spot \to E^\spot$ of complexes inducing an
isomorphism on homology.
\end{enumerate}
These resolutions are \emph{finite}, or \emph{dominate} a constant
subdivision, or are \emph{subanalytic} or \emph{PL} as in
Definition~\ref{d:resolutions}.
\end{defn}

\subsection{Syzygy theorem for complexes of poset modules}\label{sub:syzygy}

\mbox{}
\medskip

\noindent
Only certain aspects of the full syzygy theorem
\cite[Theorem~6.17]{hom-alg-poset-mods} are required, so those are
isolated here.

\begin{thm}\label{t:syzygy-complexes}
A bounded complex $M^\spot$ of modules over a poset~$Q$ is tame if and
only if it admits one, and hence all, of the following:
\begin{enumerate}
\item\label{i:syzygy-tame}%
a finite constant subdivision of~$Q$ subordinate to~$M^\spot$; or





\item\label{i:upset-res}%
a finite upset resolution; or

\item\label{i:downset-res}%
a finite downset resolution; or



\item\label{i:subordinate-constant}%
a finite constant subdivision subordinate to any given one of
items~\ref{i:upset-res}--\ref{i:downset-res}.
\end{enumerate}
The statement remains true over any subposet of a real partially
ordered group if ``tame'' and all occurrences of ``finite'' are
replaced by
``PL''.
Moreover, any tame or
PL
morphism $M^\spot \to N^\spot$ lifts to a similarly well behaved
morphism of resolutions as in parts~\ref{i:upset-res}
and~\ref{i:downset-res}.  All of these results
hold in the subanalytic case if~$M^\spot$ has compact support.
\end{thm}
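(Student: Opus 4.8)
The plan is to deduce Theorem~\ref{t:syzygy-complexes} directly from the syzygy theorem \cite[Theorems~6.12 and~6.17]{hom-alg-poset-mods}, of which it is the special case obtained by restricting to bounded complexes and retaining only three of the structures the general statement produces, namely the finite constant subdivision, the finite upset resolution, and the finite downset resolution. The cited theorem is proved over an arbitrary poset and passes through the intermediate notion of a \emph{finite encoding} \cite[\S4]{hom-alg-poset-mods}; among its conclusions are that, for a tame complex, a finite encoding exists and produces a finite upset resolution $F^\spot \to M^\spot$ and a finite downset resolution $M^\spot \to E^\spot$ each dominating a finite constant subdivision; that, conversely, the existence of any one of these finite structures forces tameness; that a common refinement subordinate to any one of them exists; and that all of this respects the subanalytic or PL type, with compact support imposed in the subanalytic case. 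So the first step is to quote this and verify the dictionary: the equivalence ``tame $\iff$ item~(1)'' is Definition~\ref{d:tame-complex} together with the elementary common-refinement of finitely many individually tame modules and morphisms; ``tame $\implies$ items~(2),~(3)'' and their converses are exactly the resolution clauses of \cite[Theorem~6.17]{hom-alg-poset-mods}; and item~(4) is its refinement clause, which in particular yields item~(1) and hence closes the cycle back to tameness.

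The only points not literally contained in \cite[Theorem~6.17]{hom-alg-poset-mods} are bookkeeping facts about indicator modules and about the auxiliary geometry, which I would supply next. For the ``one, and hence all'' half it suffices to note that a finite direct sum of indicator modules $\kk[U]$ or $\kk[D]$ is $Q$-finite---each graded piece is $0$ or $\kk$---and is tamed, along with all connected maps among such summands (Definition~\ref{d:connected-homomorphism}), by the finite constant subdivision whose regions are the atoms of the Boolean algebra generated by the finitely many up- and downsets involved; this makes a finite upset or downset resolution a tame complex in its own right, and the quasi-isomorphism to $M^\spot$ then transports tameness, as in \cite[Theorem~6.17]{hom-alg-poset-mods}.

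For the PL statement I would observe that, by Definitions~\ref{d:auxiliary-hypotheses} and~\ref{d:tame-complex}, ``PL'' already incorporates finiteness---a PL object is one tamed by a \emph{finite} constant subdivision into finite unions of convex polyhedra---and that finite Boolean combinations of convex polyhedra are again finite unions of convex polyhedra, so the atom subdivision above and every common refinement used stay PL; combined with the PL clause of \cite[Theorem~6.17]{hom-alg-poset-mods} this gives the PL analogues of (1)--(4) related exactly as in the tame case. The subanalytic case is the same once subanalyticity of finite Boolean combinations is invoked in place of the polyhedral fact, except that now finiteness of the subdivisions and subanalyticity of the encoding are guaranteed by \cite{hom-alg-poset-mods} only under relative compactness, which is why $M^\spot$ is assumed to have compact support. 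Finally, the morphism-lifting assertion is the functoriality clause of \cite[Theorem~6.17]{hom-alg-poset-mods}: a tame (resp.\ PL) map $M^\spot \to N^\spot$ admits a common finite (resp.\ PL) constant subdivision subordinate to both, the passage from such a subdivision or encoding to upset and downset resolutions is natural, and the lifted components are connected because they are constant on each region.

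I expect the actual obstacle to be not a new idea but purely organizational: faithfully matching the general encoding machinery of \cite{hom-alg-poset-mods} to the three structures isolated here, and, in the subanalytic half of the statement, tracking precisely where the compact-support hypothesis is consumed to keep constant subdivisions finite and the associated sets subanalytic.
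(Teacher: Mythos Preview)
Your proposal is correct and matches the paper's approach exactly: the paper does not prove Theorem~\ref{t:syzygy-complexes} independently but presents it as the relevant special case of \cite[Theorem~6.17]{hom-alg-poset-mods}, isolating only the structures (finite constant subdivision, finite upset resolution, finite downset resolution, and common refinement) needed downstream. Your additional bookkeeping about Boolean-atom subdivisions and the PL/subanalytic closure properties is reasonable elaboration, but the paper itself simply cites the external theorem without spelling out this dictionary.
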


\section{Stratifications, topologies, and cones}\label{s:cones}

This section collects the relevant definitions and theorems regarding
constructible sheaves from the literature.  The sizeable edifice on
which the subject is built makes it unavoidable that readers seeing
some of these topics for the first time will need to consult the cited
sources for additional background.  The goal here is to bring readers
as quickly as possible to a general statement (Theorem~\ref{t:res})
while circumscribing the ingredients necessary for its proof in such a
way that those familiar with the conjectures of Kashiwara and
Schapira, specifically \cite[Conjecture~3.17]{kashiwara-schapira2017}
and \cite[Conjecture~3.20]{kashiwara-schapira2019}, can skip
seamlessly to Section~\ref{s:res} after skimming Section~\ref{s:cones}
for terminology.

To avoid endlessly repeating hypotheses, and so readers can quickly
identify when the same hypotheses are in effect, the blanket
assumption henceforth is for $Q$ to satisfy the following, where the
positive cone~$Q_+$ is \emph{full} if it has nonempty interior.

\pagebreak[2]

\begin{hyp}\label{h:full-closed-cone}
$Q$ is a real partially ordered group with closed, full,
subanalytic~$Q_+$.
\end{hyp}

\begin{remark}\label{r:used-without-comment}
Some basic notions are used freely without further comment.
\begin{enumerate}
\item%
The notion of simplicial complex here is the one in
\cite[Definition~8.1.1]{kashiwara-schapira1990}: a collection~$\Delta$
of subsets (called \emph{simplices}) of a fixed vertex set that is
closed under taking subsets (called \emph{faces}), contains every
vertex, and is locally finite in the sense that every vertex
of~$\Delta$ lies in finitely many simplices of~$\Delta$.  Any
simplicial complex~$\Delta$ has a realization~$|\Delta|$ as a
topological space, with each relatively open simplex $|\sigma|$ being
an open convex set in an appropriate affine space.

\item%
The notion of subanalytic set in an analytic manifold is as in
\cite[\S8.2]{kashiwara-schapira1990}.

\item%
The term \emph{sheaf} on a topological space here means a sheaf of
$\kk$-vector spaces.  Sometimes in the literature this word is used to
mean an object in the bounded derived category of sheaves of
$\kk$-vector spaces; for clarity here, the term \emph{derived sheaf}
is always used when an object in the derived category is intended.
\end{enumerate}
\end{remark}

\subsection{Subanalytic triangulation}\label{sub:triangulation}

\begin{defn}\label{d:subanalytic-triangulation}
Fix a real analytic manifold~$X$.
\begin{enumerate}
\item%
A \emph{subanalytic triangulation} of a subanalytic set~$Y \subseteq
X$ is a homeomorphism $|\Delta| \simto Y$ such that the image in~$Y$
of the realization~$|\sigma|$ of the relative interior of each simplex
$\sigma \in \Delta$ is a subanalytic submanifold of~$X$.

\item%
A subanalytic triangulation of~$Y$ is \emph{subordinate} to a
(derived) sheaf $\FF$ on~$X$ if $Y$ contains the support of~$\FF$ and
(every homology sheaf~of)~$\FF$ restricts to a constant sheaf on the
image in~$Y$ of every cell~$|\sigma|$.
\end{enumerate}
\end{defn}

\subsection{Subanalytic constructibility}\label{sub:constructible}

\begin{defn}\label{d:constructible-sheaf}
A (derived) sheaf on a real analytic manifold is \emph{subanalytically
weakly constructible} if there is a subanalytic triangulation
subordinate to it.  $\!$The word\hspace{-.1ex} ``weakly'' is omitted
if, in addition, the stalks have finite dimension as $\kk$-vector
spaces.
\end{defn}

\begin{remark}\label{r:constructible-sheaf}
Readers less familiar with constructibility can safely take
Definition~\ref{d:constructible-sheaf} at face value.  For readers
familiar with constructibility by other definitions,
this characterization is a nontrivial theorem, which rests on the
triangulability of subanalytic sets
\cite[Proposition~8.2.5]{kashiwara-schapira1990} and other results
concerning subanalytic stratification; see
\cite[\S8.4]{kashiwara-schapira1990} for the full proof of
equivalence, especially Theorem~8.4.2, Definition~8.4.3, and part~(a)
of the proof of Theorem~8.4.5(i) there.  Note that the modifier
``subanalytically'' in Definition~\ref{d:constructible-sheaf} does not
appear in \cite{kashiwara-schapira1990}, because the context there is
subanalytic throughout.  Also note that it makes no difference whether
one takes constructible objects in the derived category or the derived
category of constructible objects, since they yield the same result
\cite[Theorem~8.4.5]{kashiwara-schapira1990}: every constructible
derived sheaf is represented by a complex of constructible sheaves.
\end{remark}

The reason to use subanalytic triangulation instead of arbitrary
subanalytic stratification is the following, which is a step on the
way to a constant subdivision.

\begin{lemma}[{\cite[Proposition~8.1.4]{kashiwara-schapira1990}}]\label{l:tri-constant}
For a simplex~$\sigma$ in a subanalytic triangulation subordinate to a
constructible sheaf~$\FF$, there is a natural isomorphism
$\Gamma(|\sigma|,\FF) \simto \FF_x$ from the sections over~$|\sigma|$
to the stalk at every point~$x \in \sigma$.
\end{lemma}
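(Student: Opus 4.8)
The statement is Lemma~\ref{l:tri-constant}, which is cited from \cite[Proposition~8.1.4]{kashiwara-schapira1990}, so strictly the work is already done there; the plan below sketches how one would recover it from first principles. The assertion is that for a simplex $\sigma$ in a subanalytic triangulation subordinate to a constructible sheaf $\FF$, the restriction map $\Gamma(|\sigma|, \FF) \to \FF_x$ is an isomorphism for every $x \in \sigma$. The key hypothesis is that $\FF$ restricts to a \emph{constant} sheaf on the relatively open cell $|\sigma|$ (Definition~\ref{d:subanalytic-triangulation}(2)), so the proof amounts to two reductions: first, that sections and stalks of a constant sheaf on a contractible (indeed convex) space agree; second, that passage to the stalk genuinely only sees the cell $|\sigma|$ and not a neighborhood in~$X$.

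The plan is as follows. First I would observe that $|\sigma|$, being the realization of the relative interior of a simplex, is an open convex subset of an affine space, hence connected and locally connected, and in particular every point has a neighborhood basis of connected open sets contained in $|\sigma|$. A constant sheaf $\underline{V}$ on such a space has $\Gamma(U, \underline{V}) = V$ for every connected open $U$, and the restriction maps between connected opens are all the identity on~$V$; this is immediate from the definition of the constant sheaf as the sheafification of the constant presheaf, using that $|\sigma|$ is locally connected so sheafification does not change values on connected opens. Applying this with $U = |\sigma|$ and with $U$ ranging over a connected neighborhood basis of~$x$ shows $\Gamma(|\sigma|, \FF|_{|\sigma|}) \to \varinjlim_{x \in U \subseteq |\sigma|} \FF(U)$ is the identity map of the common value~$V$.

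The second, more delicate point is that the stalk $\FF_x$ computed in~$X$ equals the stalk of $\FF|_{|\sigma|}$ computed in~$|\sigma|$: that is, $\varinjlim_{x \in W \subseteq X} \FF(W) = \varinjlim_{x \in U \subseteq |\sigma|} \FF(U)$. This is a general fact about restriction of sheaves to a subspace $i \colon |\sigma| \hookrightarrow X$ and the identity $(i^{-1}\FF)_x = \FF_{i(x)}$, valid because $i^{-1}$ is computed by the colimit over open sets of~$X$ meeting $|\sigma|$; here one does not even need $|\sigma|$ locally closed, only the general adjunction/stalk formula for pullback of sheaves. Combining the two reductions: $\Gamma(|\sigma|, \FF) = \Gamma(|\sigma|, \FF|_{|\sigma|}) = V = \FF_x$, and chasing through, the composite is exactly the restriction-to-stalk map, so it is an isomorphism. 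Naturality in~$\sigma$ (and compatibility of the isomorphisms as $x$ varies) follows because all maps in sight are restriction maps of~$\FF$.

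The main obstacle, such as it is, is purely bookkeeping: making sure that ``constant sheaf on $|\sigma|$'' is used correctly when $\FF$ is a \emph{derived} sheaf, in which case ``constant'' means each cohomology sheaf $H^k(\FF)$ is constant on $|\sigma|$, so one runs the above argument on each $H^k$ and then notes that $\Gamma(|\sigma|, -) \to (-)_x$ being a quasi-isomorphism follows because $|\sigma|$ is contractible (so $R\Gamma$ has no higher terms for a constant sheaf) — equivalently one invokes that on a contractible space derived global sections of a constant derived sheaf recover the stalk complex. For the plain-sheaf statement as written in Lemma~\ref{l:tri-constant}, only the elementary colimit argument above is needed, and there is no real obstacle; the substance was isolating the right hypothesis on the triangulation (subordinate, hence constant on cells), which Definition~\ref{d:subanalytic-triangulation} has already arranged.
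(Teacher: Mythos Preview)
Your proposal is correct, and there is nothing to compare: the paper gives no proof of Lemma~\ref{l:tri-constant} but merely cites \cite[Proposition~8.1.4]{kashiwara-schapira1990}, as you yourself note at the outset. Your sketch---constant sheaf on the convex (hence connected, locally connected, contractible) cell~$|\sigma|$, plus the general identity $(i^{-1}\FF)_x = \FF_{i(x)}$ for the inclusion $i\colon |\sigma|\hookrightarrow X$---is the standard argument and is essentially what one finds in the cited source.
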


The reason for specifically including the piecewise linear (PL)
condition in Section~\ref{s:syzygy} is for its application here, as
one of the conjectures is in that setting.  For this purpose, the
sheaf version of this particularly strong type of constructibility is
needed.

\begin{defn}\label{d:PL-constructible}
Fix $Q$ satisfying Hypothesis~\ref{h:full-closed-cone}.
\begin{enumerate}
\item\label{i:stratification}%
A~subanalytic subdivision
(Definition~\ref{d:auxiliary-hypotheses}.\ref{i:subanalytic}) of~$Q$
is \emph{subordinate} to a (derived) sheaf~$\FF$ on~$Q$ if the
restriction of~$\FF$ to every \emph{stratum} (meaning subset in the
subdivision) is constant of finite rank.

\item\label{i:PL-constructible}%
If the subanalytic subdivision is PL
(Definition~\ref{d:auxiliary-hypotheses}.\ref{i:PL}) and $Q$ is
polyhedral (Definition~\ref{d:pogroup}.\ref{i:polyhedral}), then $\FF$
is said to be \emph{piecewise linear}, abbreviated \emph{PL}.
\end{enumerate}
\end{defn}

\begin{remark}\label{r:PL}
Definition~\ref{d:PL-constructible}.\ref{i:PL-constructible} is not
verbatim the same as \cite[Definition~2.3]{kashiwara-schapira2019},
which only requires $Q$ to be a (nondisjoint) union of finitely
polyhedra on which $\FF$ is constant.  However, the notion of PL
(derived) sheaf thus defined is the same, since any finite union of
polyhedra can be refined to a finite union that is disjoint---that is,
a partition.  This refinement can be done, for example, by
expressing~$Q$ as the union of (relatively open) faces in the
arrangement of all hyperplanes bounding halfspaces defining the given
polyhedra, of which there are only finitely many.
\end{remark}

\subsection{Conic and Alexandrov topologies}\label{sub:topology}

\begin{defn}\label{d:conic-topology}
Fix a real partially ordered group~$Q$ with closed positive
cone~$Q_+$.
\begin{enumerate}
\item%
The \emph{conic topology} on~$Q$ induced by~$Q_+$ (or induced by the
partial order) consists of the upsets that are open in the ordinary
topology on~$Q$.

\item%
The \emph{Alexandrov topology} on~$Q$ induced by~$Q_+$ (or induced by
the partial order) consists of all the upsets in~$Q$.
\end{enumerate}
To avoid confusion when it might occur, write
\begin{enumerate}
\item%
$Q^\con$ for the set~$Q$ with the conic topology induced by~$Q_+$,

\item%
$Q^\ale$\, for the set~$Q$ with the Alexandrov topology induced
by~$Q_+$,~and

\item%
$Q^\ord$ for the set~$Q$ with its ordinary topology.
\end{enumerate}
\end{defn}

\begin{remark}\label{r:conic-topology}
The conic topology in Definition~\ref{d:conic-topology} is also known
as the \emph{$\gamma$-topology}, where $\gamma = Q_+$
\cite{kashiwara-schapira1990, kashiwara-schapira2018,
kashiwara-schapira2019}.  The Alexandrov topology makes just as much
sense on any poset.
\end{remark}

The type of stratification Kashiwara and Schapira specify
\cite[Conjecture~3.17]{kashiwara-schapira2017} is not quite the same
as subanalytic subdivision in
Definition~\ref{d:auxiliary-hypotheses}.\ref{i:subanalytic}.  To be
precise, first recall two standard topological concepts.

\begin{defn}\label{d:locally-closed}
A subset of a topological space~$Q$ is \emph{locally closed} if it is
the intersection of an open subset and a closed subset.  A family of
subsets of~$Q$ is \emph{locally finite} if each compact subset of~$Q$
meets only finitely many members of the family.
\end{defn}

\begin{defn}[{\cite[Definition~3.15]{kashiwara-schapira2017}}]\label{d:stratification}
Fix $Q$ satisfying Hypothesis~\ref{h:full-closed-cone}.
\begin{enumerate}
\item%
A \emph{conic stratification} of a closed subset $S \subseteq Q$ is a
locally finite family of pairwise disjoint subanalytic subsets, called
\emph{strata}, which are locally closed in the conic topology and have
closures whose union is~$S$.

\item%
The stratification is \emph{subordinate} to a (derived) sheaf~$\FF$
on~$Q$ if $S$ equals the support of~$\FF$ and the restriction of (each
homology sheaf of)~$\FF$ to every stratum is locally constant of
finite~rank.
\end{enumerate}
\end{defn}

\begin{remark}\label{r:stratification}
A conic stratification is called a $\gamma$-stratification in
\cite[Definition~3.15]{kashiwara-schapira2017}, with $\gamma = Q_+$.
The only differences between conic stratification and subanalytic
partition of a subset~$S$ in
Definition~\ref{d:auxiliary-hypotheses}.\ref{i:subanalytic} are that
\begin{itemize}
\item%
conic stratifications are only required to be locally finite, not
necessarily finite;

\item%
conic strata are required to be locally closed in the conic topology
(that is, an intersection of an open upset in~$Q^\ord$ with a closed
downset in~$Q^\ord$); and

\item%
the union need not actually equal all of~$S$, because only the union
of the stratum closures is supposed to equal~$S$.
\end{itemize}
\end{remark}

\pagebreak[2]

\begin{prop}\label{p:stalks}
Fix a real partially ordered group~$Q$ with closed
positive~cone~$Q_+$.
\begin{enumerate}
\item\label{i:identity-continuous}%
The identity on~$Q$ yields continuous maps of topological spaces
$$%
  \iota: Q^\ord \to Q^\con
  \qquad\text{and}\qquad
  \jmath: Q^\ale \to Q^\con.
$$

\item\label{i:ord-pulled-back}%
Any sheaf~$\FF$ on~$Q^\ord$ pulled back from $Q^\con$ has natural maps
$$%
  \FF_q \to \FF_{q'}\text{ for }q \preceq q'\text{ in }Q
$$
on stalks that functorially define a $Q$-module $\bigoplus_{q\in Q}
\FF_q$.

\item\label{i:alexdrov=mod}%
Similarly, any sheaf~$\GG$ on~$Q^\ale$ has natural maps
$$%
  \GG_q \to \GG_{q'}\text{ for }q \preceq q'\text{ in }Q
$$
on stalks that functorially define a $Q$-module $\bigoplus_{q\in Q}
\GG_q$.  This functor from sheaves on~$Q^\ale$ to $Q$-modules is an
equivalence of categories.

\item\label{i:same-sheaf}%
If sheaves $\FF$ on~$Q^\ord$ and~$\GG$ on $Q^\ale$ are both pulled
back from the same sheaf $\EE$ on~$Q^\con$, then the $Q$-modules in
items~\ref{i:ord-pulled-back} and~\ref{i:alexdrov=mod} are the same.

\item\label{i:exact-pushforward}%
The pushforward functor $\jmath_*$ is exact, and $\jmath_* \jmath^{-1}
\EE \cong \EE$.
\end{enumerate}
\end{prop}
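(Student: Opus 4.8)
The plan is to treat the five items essentially in order, as each successive statement either unpacks a standard fact about topologies on ordered groups or builds on the previous one. For item~\ref{i:identity-continuous}, the point is purely set-theoretic: a subbasic open set of $Q^\con$ is an open upset of $Q^\ord$, hence open in $Q^\ord$, which gives continuity of~$\iota$; and every open upset is in particular an upset, hence open in $Q^\ale$, which gives continuity of~$\jmath$. (One should note the identity is genuinely the underlying map of sets in both cases, so there is nothing to check beyond comparing the two collections of open sets.)

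For item~\ref{i:ord-pulled-back}, given $q \preceq q'$ I would produce the stalk map $\FF_q \to \FF_{q'}$ as follows. Because $\FF = \iota^{-1}\EE$ for a sheaf $\EE$ on $Q^\con$, its stalk at~$q$ is the stalk $\EE_q$ computed over the conic neighborhood filter of~$q$, i.e.\ the colimit of $\EE(U)$ over open upsets $U \ni q$. Every open upset containing~$q'$ also contains~$q$ (since $q \preceq q'$ and $U$ is an upset), so the neighborhood filter of~$q'$ is a subfilter of that of~$q$; the induced map on colimits is the desired $\FF_q \to \FF_{q'}$. Functoriality in the pair $(q,q')$ — compatibility of the composite $\FF_q \to \FF_{q'} \to \FF_{q''}$ with $\FF_q \to \FF_{q''}$ — is immediate from the transitivity of filter inclusion, so the collection $\bigoplus_q \FF_q$ satisfies the axioms of Definition~\ref{d:poset-module}. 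Item~\ref{i:alexdrov=mod} is the same construction with $Q^\ale$ in place of $Q^\con$: here the neighborhood filter of~$q$ has a \emph{least} element, the principal upset $q + Q_+$, so $\GG_q = \GG(q+Q_+)$, and the restriction maps $\GG(q+Q_+) \to \GG(q'+Q_+)$ along the inclusion $q'+Q_+ \subseteq q+Q_+$ are exactly the structure maps; that this functor is an equivalence onto $Q$-modules is the classical equivalence between sheaves on an Alexandrov space and presheaves on the underlying poset (one checks the sheaf condition is automatic because principal upsets are the minimal opens), which I would either cite or sketch in one line.

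Item~\ref{i:same-sheaf} then follows by comparing the two colimit descriptions against the common source: both $\FF_q$ and $\GG_q$ receive a canonical map from $\EE_q$ (the conic stalk), and in fact $\FF_q = \EE_q$ since $\iota^{-1}$ preserves stalks, while the map $\EE_q \to \GG_q = \GG(q+Q_+)$ is an isomorphism because $q+Q_+$ is, among conic-open neighborhoods after applying $\jmath$... — here one must be slightly careful, since $q+Q_+$ is \emph{not} conic-open — so instead I would argue that $\GG_q = (\jmath^{-1}\EE)_q = \varinjlim_{V} \EE(V)$ over Alexandrov-open $V \ni q$, cofinal in which is $q+Q_+$; but $\EE(q+Q_+) = \varinjlim_{U \supseteq q+Q_+,\, U \text{ conic-open}} \EE(U)$ is not obviously $\EE_q$. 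The cleanest route, which I expect to be the main obstacle and the one genuinely using that $Q_+$ is closed, is to show directly that the conic-open upsets containing~$q$ are cofinal among the ordinary-open sets containing $q+Q_+$ in a suitable sense — equivalently that $q+Q_+ = \bigcap\{U : U \text{ open upset},\ q \in U\}$, which holds precisely because $Q_+$ is closed (so its complement is a union of open half-space-like pieces that can be shaved off by open upsets). Granting this, both module structures are identified with the one coming from $\EE$, and the naturality of all maps makes the identification one of $Q$-modules.

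Finally, item~\ref{i:exact-pushforward}: exactness of $\jmath_*$ is equivalent to $R^{>0}\jmath_* = 0$, and since the fibers of $\jmath$ over a point of $Q^\con$ — in the topos-theoretic sense — have the relevant cohomological vanishing because the relevant comparison is between an Alexandrov space and its conic quotient, I would instead argue concretely: $\jmath$ is the identity on points, so $(\jmath_*\GG)(U) = \GG(U)$ for $U$ conic-open, and a short exact sequence of sheaves on $Q^\ale$ stays exact after applying the section functor over conic-open $U$ because such $U$, being open upsets, are exactly the sets over which sections of Alexandrov sheaves can be checked stalk-locally in a way compatible with the conic neighborhood filters — more precisely, $\jmath_*$ is right adjoint to the exact functor $\jmath^{-1}$ and one shows it is also left exact as always plus right exact here because $\jmath^{-1}\jmath_* = \id$ via the cofinality from item~\ref{i:same-sheaf}. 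The isomorphism $\jmath_*\jmath^{-1}\EE \cong \EE$ is then the assertion that for conic-open~$U$ one has $(\jmath^{-1}\EE)(U) = \EE(U)$, which holds because $U$ is open in both topologies and the sheafification inherent in $\jmath^{-1}$ does nothing on the already-open~$U$. I expect item~\ref{i:same-sheaf}, and specifically the closedness-of-$Q_+$ cofinality argument feeding it, to be the crux; the rest is bookkeeping with colimits and adjunctions.
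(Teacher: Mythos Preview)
Your treatment of items~1--3 matches the paper's, modulo a slip in item~2: you write that every open upset containing~$q'$ also contains~$q$, but it is the reverse---every open upset containing~$q$ contains~$q'$, since upsets are closed upward and $q \preceq q'$. With that inclusion corrected the induced colimit map goes the way you want, and your argument then agrees with the paper's.

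Where you diverge from the paper is in locating the difficulty. You flag item~4 as the crux and propose a cofinality argument using closedness of~$Q_+$, but item~4 is immediate: pullback along any continuous map preserves stalks, so $\FF_q = (\iota^{-1}\EE)_q = \EE_q = (\jmath^{-1}\EE)_q = \GG_q$, and the structure maps agree by naturality of these identifications. The paper disposes of item~4 in a single sentence. Your identity $q + Q_+ = \bigcap\{U : U \text{ open upset},\ q \in U\}$ is correct and does use closedness of~$Q_+$, but it is not needed.

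The genuine gap is in item~5. Your claim that $\jmath^{-1}\jmath_* = \id$ on Alexandrov sheaves is false. Take $Q = \RR$ with $Q_+ = [0,\infty)$ and let $\GG$ correspond to the skyscraper module with~$\kk$ in degree~$0$ only. Every open upset $(a,\infty)$ with $a < 0$ contains points strictly below~$0$, and a compatible family of sections over such an interval must then vanish at~$0$; hence $\jmath_*\GG = 0$, so $\jmath^{-1}\jmath_*\GG = 0 \neq \GG$. (These are precisely the ephemeral modules mentioned in the paper's introduction.) Consequently your route to right-exactness of~$\jmath_*$ collapses. The paper instead checks surjectivity of~$\jmath_*$ on conic stalks directly---each such stalk is a filtered colimit of section spaces, and filtered colimits of vector spaces are exact---citing Berkouk--Petit for the details; and it verifies $\jmath_*\jmath^{-1}\EE \cong \EE$ by observing that the unit map is the identity on stalks, again via the general fact that pullback preserves stalks.
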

\begin{proof}
The maps in item~\ref{i:identity-continuous} are continuous by
definition: the inverse image of any open set is open because the
ordinary topology refines each of the target topologies.

For item~\ref{i:ord-pulled-back}, if $\FF = \iota^{-1}\EE$ is pulled
back to~$Q^\ord$ from a sheaf $\EE$ on~$Q^\con$, then $\FF$ has the
same stalks as~$\EE$ (as a sheaf pullback in any context does), so the
natural morphisms are induced by the restriction maps of~$\EE$ from
open neighborhoods of~$q$~to~those~of~$q'$.

The result in~\ref{i:alexdrov=mod} holds for arbitrary posets; for an
exposition in a context relevant to persistence, see
\cite[Theorem~4.2.10 and Remark~4.2.11]{curry-thesis} and
\cite{curry2019}.

For item~\ref{i:same-sheaf}, the stalks $\FF_q = \EE_q = \GG_q$ are
the same.

For item~\ref{i:exact-pushforward}, exactness is proved in passing in
the proof of \cite[Lemma~3.5]{berkouk-petit2019}, but it is also
elementary to check that a surjection $\GG \onto \GG'$ of sheaves
on~$Q^\ale$ yields a surjection of stalks for the pushforwards
to~$Q^\con$ because direct limits (filtered colimits) are exact.  That
$\jmath_* \jmath^{-1} \EE \cong \EE$ is because the natural morphism
is the identity on stalks.
\end{proof}

\subsection{Conic microsupport}\label{sub:microsupport}

\noindent
The \emph{microsupport} of a (derived) sheaf on an analytic
manifold~$X$ is a certain closed conic isotropic subset of the
cotangent bundle $T^*X$.  The notion of microsupport is a central
player in \cite{kashiwara-schapira1990}, to which the reader is
referred for background on the topic.  However, although the main
result in this section (Theorem~\ref{t:res}) is stated in terms of
microsupport, the next theorem allows the reader to ignore it
henceforth, as pointed out by Kashiwara and Schapira themselves
\cite[Remark~1.9]{kashiwara-schapira2018}, by immediately translating
to the more elementary context of sheaves in the conic topology in
Section~\ref{sub:topology}.

\begin{thm}[{\cite[Theorem~1.5 and Corollary~1.6]{kashiwara-schapira2018}}]\label{t:microsupport}
Fix $Q$ satisfying Hypothesis~\ref{h:full-closed-cone}.  The
pushforward $\iota_*$ of the map~$\iota$ from
Proposition~\ref{p:stalks}.\ref{i:identity-continuous} induces an
equivalence from the category of sheaves with microsupport contained
in the negative polar cone~$Q_+^\vee$ to the category of sheaves in
the conic topology.  The pullback~$\iota^{-1}$ is a quasi-inverse.
The same assertions hold for the bounded derived categories.
\end{thm}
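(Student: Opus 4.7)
The plan is to establish $(\iota^{-1}, \iota_*)$ as an adjoint equivalence between the specified subcategories by showing that both the unit and counit of the adjunction are isomorphisms. The setup is favorable because $\iota$ is the identity on underlying sets, so stalks in the two topologies agree and sections on conic opens pass through both functors tautologically. Under this adjunction, the two natural transformations to verify are $\EE \to \iota_*\iota^{-1}\EE$ on $Q^\con$ and $\iota^{-1}\iota_*\FF \to \FF$ on $Q^\ord$.

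First I would handle the easier direction: for any sheaf $\EE$ on~$Q^\con$, the unit evaluated on a conic-open $U$ reduces to the identity $\EE(U) \to \EE(U)$, because $\iota^{-1}\EE$ has sections $\EE(U)$ over every conic open~$U$ (now regarded as an ordinary open), and $\iota_*$ records those same sections on~$U$. One also checks that $\iota^{-1}\EE$ has microsupport in~$Q_+^\vee$: each $q \in Q$ admits a cofinal family of conic-open neighborhoods contained in the upward translates $q + \mathrm{int}(Q_+)$, so sections of $\iota^{-1}\EE$ propagate without obstruction in every direction of $\mathrm{int}(Q_+)$, and the microlocal propagation criterion \cite[Prop.~5.1.1]{kashiwara-schapira1990} forces the microsupport to lie in the polar cone~$Q_+^\vee$.

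The heart of the argument is the counit direction: given $\FF$ on $Q^\ord$ with microsupport in~$Q_+^\vee$, show that $\iota^{-1}\iota_*\FF \to \FF$ is an isomorphism on every stalk. The stalk at~$q$ on the left is $\varinjlim_U \FF(U)$ over conic-open $U \ni q$. I would construct a cofinal family of such neighborhoods by intersecting upward translates of the form $q + \mathrm{int}(Q_+)$ with shrinking ordinary neighborhoods of~$q$, which exist because $Q_+$ is full and closed. The non-characteristic deformation lemma \cite[Prop.~2.7.2]{kashiwara-schapira1990}, applied along the moving boundary of this family using the hypothesis on microsupport, then delivers $\FF(U) \simto \FF_q$, and the colimit collapses to~$\FF_q$. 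This is the main obstacle: it requires a judicious choice of cofinal conic neighborhoods whose topological boundaries are everywhere non-characteristic for~$\FF$, which is precisely where the containment of the microsupport in $Q_+^\vee$ is used.

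For the derived-category statement, I would check that $\iota^{-1}$ is exact (immediate, since it is pullback along the identity on underlying sets) and that $R^i\iota_*\FF = 0$ for $i > 0$ whenever the microsupport of~$\FF$ is in~$Q_+^\vee$. The vanishing reduces, point by point, to $H^i(U;\FF) = 0$ on the same cofinal conic neighborhoods, which follows from the derived version of the non-characteristic deformation argument. Exactness of~$\iota^{-1}$ and the collapse $R\iota_* = \iota_*$ on the relevant subcategory let the unit/counit arguments above go through on complexes, and the triangulated equivalence then follows from the general fact that an adjoint pair with invertible unit and counit is an equivalence.
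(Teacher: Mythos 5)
The paper does not actually prove Theorem~\ref{t:microsupport}: it is cited verbatim from Kashiwara and Schapira \cite[Theorem~1.5 and Corollary~1.6]{kashiwara-schapira2018}, so there is no in-paper proof to compare against. Your reconstruction follows the broad strategy one finds in the literature for $\gamma$-topology equivalences (adjoint pair $\iota^{-1} \dashv \iota_*$, stalkwise unit/counit check, propagation via non-characteristic deformation, and derived acyclicity of $\iota_*$ on the relevant subcategory), so the overall shape is sound.

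There is, however, a concrete gap in the counit step. The family you propose as cofinal among conic-open neighborhoods of a point $q$ --- ``upward translates of the form $q + \mathrm{int}(Q_+)$ intersected with shrinking ordinary neighborhoods of~$q$'' --- consists of sets that are neither conic-open nor, in fact, neighborhoods of~$q$. First, $q \notin q + \mathrm{int}(Q_+)$, because $Q_+$ has trivial unit group, so $\0 \notin \mathrm{int}(Q_+)$; thus $q + \mathrm{int}(Q_+)$ fails to contain $q$ before you even intersect. Second, intersecting an upset with an ordinary ball destroys the upset property, so the result is not open in the conic topology. A correct cofinal family is, for instance, $U_\epsilon = B_\epsilon(q) + \mathrm{int}(Q_+)$ for shrinking ordinary balls $B_\epsilon(q)$: each $U_\epsilon$ is open, it is an upset because $\mathrm{int}(Q_+) + Q_+ = \mathrm{int}(Q_+)$ for a full closed cone, and it contains $q$ since $q = (q - v) + v$ with $v \in \mathrm{int}(Q_+)$ small. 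Non-characteristic deformation must then be run against the boundaries of these sets, and the same corrected family is needed to make the $R^i\iota_* = 0$ computation in the derived statement go through. With that repair your sketch matches the standard argument, but as written the neighborhoods you deform over are not the right objects.
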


\begin{remark}\label{r:push-and-pull}
The pushforward $\iota_*$ and the pullback $\iota^{-1}$ have concrete
geometric descriptions.  Since $\iota$ is the identity on~$Q$, the
pushforward of a sheaf~$\FF$ on $Q$ has~sections
$$%
  \Gamma(U, \iota_*\FF) = \Gamma(U,\FF)
$$
for any open upset~$U$, where ``open upset'' means the same things as
``upset that is open in the usual topology'' and ``subset that is open
in the conic topology''.  On the other hand, over any convex
ordinary-open set~$\OO$, the pullback to the ordinary topology of a
sheaf~$\EE$ in the conic topology has sections
$$%
  \Gamma(\OO,\iota^{-1}\EE) = \Gamma(\OO + Q_+, \EE),
$$
namely the sections of~$\EE$ over the upset generated by~$\OO$
\cite[(3.5.1)]{kashiwara-schapira1990}.
\end{remark}

\begin{remark}\label{r:microsupport}
What Theorem~\ref{t:microsupport} does in practice is allow a given
(derived) sheaf with microsupport contained in the negative polar
cone~$Q_+^\vee$ to be replaced with an isomorphic object that is
pulled back from the conic topology induced by the partial order.  The
reason for mentioning the notion of microsupport at all is to
emphasize that constructibility in the sense of
Definition~\ref{d:constructible-sheaf} requires the ordinary topology.
This may seem a fine distinction, but the conjectures of Kashiwara and
Schapira proved in Section~\ref{s:strat} entirely concern the
transition from the ordinary to the conic topology, so it is crucial
to be clear on this point.
\end{remark}

In view of Remark~\ref{r:microsupport}, discussion of constructibility
for sheaves on conic topologies requires the following.  The ad hoc
nature of this definition is justified by Theorem~\ref{t:res'}.

\begin{defn}\label{d:conic-sheaf}
Fix $Q$ satisfying Hypothesis~\ref{h:full-closed-cone}.  A
\emph{constructible conic sheaf} on~$Q$ is a sheaf in the conic
topology~$Q^\con$ whose pullback via $\iota^{-1}$ is subanalytically
constructible.
\end{defn}

\section{Resolutions of constructible sheaves}\label{s:res}

\begin{defn}\label{d:flasque}
Fix $Q$ satisfying Hypothesis~\ref{h:full-closed-cone}.
\begin{enumerate}
\item%
A \emph{subanalytic upset sheaf} on~$Q$ is the extension by zero of
the rank~$1$ constant sheaf on an open subanalytic upset in~$Q^\ord$.

\item%
A \emph{subanalytic downset sheaf} on~$Q$ is the pushforward of the
rank~$1$ locally constant sheaf on a closed subanalytic downset
in~$Q^\ord$.

\item%
A \emph{subanalytic upset resolution} of a complex~$\FF^\spot$ of
sheaves on~$Q^\ord$ is a homomorphism $\UU^\spot \to \FF^\spot$ of
complexes inducing an isomorphism on homology, with each $\UU^i$ being
a direct sum of subanalytic upset sheaves.

\item%
A \emph{subanalytic downset resolution} of a complex~$\FF^\spot$ of
sheaves on~$Q^\ord$ is a homomorphism $\FF^\spot \to \DD^\spot$ of
complexes inducing an isomorphism on homology, with each each $\DD^i$
being a direct sum of subanalytic downset sheaves.
\end{enumerate}
Either type of resolution is
\begin{itemize}
\item%
\emph{finite} if the total number of summands across all homological
degrees is finite;

\item%
\emph{PL} if $Q$ is polyhedral and the upsets or downsets are PL.
\end{itemize}
\end{defn}

\begin{prop}\label{p:upset}
Fix an upset $U$ in a real partially ordered group~$Q$ with closed
positive cone.  If $U^\circ$ is the interior of~$U$ in~$Q^\ord$, then
the sheaves on~$Q^\ale$ corresponding to~$\kk[U]$ and $\kk[U^\circ]$
push forward to the same sheaf on~$Q^\con$.
\end{prop}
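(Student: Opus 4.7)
The plan is to compute the sections of $\jmath_*\kk[U]$ and $\jmath_*\kk[U^\circ]$ on an arbitrary open upset $V$ of $Q^\ord$ (equivalently, an arbitrary open set of $Q^\con$) and verify that they are canonically identified by the natural inclusion $\kk[U^\circ] \hookrightarrow \kk[U]$ of $Q$-modules. By Proposition~\ref{p:stalks}.\ref{i:alexdrov=mod}, the sheaf on $Q^\ale$ corresponding to a $Q$-module $M$ sends an upset $V$ to the limit $\lim_{q \in V} M_q$ in $\kk$-vector spaces, with structure arrows $M_q \to M_{q'}$ for $q \preceq q'$; since $\jmath$ is the identity on underlying points, this also computes $\Gamma(V, \jmath_* M)$.

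The main computation shows that, for every nonempty open upset $V$ of $Q^\ord$,
\[
\Gamma(V, \jmath_* \kk[U]) \cong \begin{cases} \kk & \text{if } V \subseteq U, \\ 0 & \text{otherwise,} \end{cases}
\]
with restriction maps being the identity on $\kk$ when both $V$ and its subset lie in $U$ and the zero map otherwise. If $V \subseteq U$, then every structure map in the diagram is the identity on $\kk$, and $V$ is upward-directed (because $Q_+$ generates $Q$ as a group, any two elements of $Q$---and hence of the upset $V$---admit a common upper bound in $V$), so the limit collapses to a single copy of $\kk$. If $V \not\subseteq U$, pick $q_0 \in V \setminus U$; for any $q \in V$, choose a common upper bound $q'' \in V$ of $q$ and $q_0$. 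The zero map $\kk[U]_{q_0} = 0 \to \kk[U]_{q''}$ forces $s_{q''} = 0$ in any compatible family $(s_r)_{r\in V}$, after which either $q \notin U$ (so $s_q = 0$ trivially) or $q \in U$ (so the identity $\kk[U]_q \to \kk[U]_{q''}$ forces $s_q = s_{q''} = 0$). Every compatible family therefore vanishes.

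The final ingredient is the observation that $V \subseteq U \iff V \subseteq U^\circ$ for every open $V$: the reverse implication is trivial, and the forward one holds because $U^\circ$ is by construction the largest open subset of $U$. Hence $\jmath_* \kk[U]$ and $\jmath_* \kk[U^\circ]$ admit the same sections with identical restriction maps, and pushing forward the inclusion $\kk[U^\circ] \hookrightarrow \kk[U]$ realizes this identification as the desired isomorphism of sheaves on $Q^\con$. The main obstacle is the vanishing argument in the second case of the formula, which requires using directedness of $V$ to propagate the zero value from a single bad point $q_0 \notin U$ to every other $q \in V$ through a common upper bound, distinguishing the cases $q \in U$ and $q \notin U$ to match the structure of $\kk[U]$.
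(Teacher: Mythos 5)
Your proof is correct, and the underlying idea matches the paper's: the pushforward to $Q^\con$ cannot distinguish $U$ from $U^\circ$ because open conic neighborhoods detect only the interior. The difference is one of presentation: the paper computes stalks of $\jmath_*\kk[U]$ at each $q$ as a direct limit of $\Gamma(p+Q_+^\circ,\,\cdot\,)$ over $p \in q - Q_+^\circ$ and observes those sections depend only on $U^\circ$, whereas you compute sections over \emph{every} open upset $V$ of $Q^\con$ and show directly that $\Gamma(V,\jmath_*\kk[U]) = \kk$ if $V \subseteq U$ and $0$ otherwise, then use that an open $V$ lies in $U$ iff it lies in $U^\circ$. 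Your version is a bit more work but also a bit more explicit: it identifies the sheaves globally (with compatible restriction maps and a named morphism, namely the pushforward of the inclusion $\kk[U^\circ]\hookrightarrow\kk[U]$) rather than relying on the stalk-wise criterion for isomorphism, and the use of upward-directedness of open upsets, coming from $Q_+$ generating $Q$, correctly drives both the collapse of the limit to $\kk$ when $V\subseteq U$ and the propagation of the zero section from a single point of $V\setminus U$. The two arguments are two routes to the same computation.
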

\begin{proof}
The stalk at~$q$ of any sheaf on~$Q^\con$ is the direct limit over
points $p \in q - Q_+^\circ$ of the sections over $p + Q_+^\circ$.  In
the case of the pushforward of the sheaf on~$Q^\ale$ corresponding to
an upset module, these sections are~$\kk$ if $p$ lies interior to the
upset and~$0$ otherwise.  The result holds because the upsets $U$
and~$U^\circ$ have the same interior, namely~$U^\circ$.
\end{proof}

\begin{prop}\label{p:downset}
Fix a downset $D$ in a real partially ordered group~$Q$ with closed
positive cone.  If $\oD$ is the closure of $D$ in~$Q^\ord$, then the
sheaves on~$Q^\ale$ corresponding to~$\kk[D]$ and $\kk[\oD]$ push
forward to the same sheaf on~$Q^\con$.
\end{prop}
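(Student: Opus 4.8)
The plan is to derive this as the dual of its upset counterpart, Proposition~\ref{p:upset}, using a short exact sequence together with the exactness of~$\jmath_*$ (Proposition~\ref{p:stalks}.\ref{i:exact-pushforward}). For a $Q$-module~$M$ write $\wt M$ for the corresponding sheaf on~$Q^\ale$ under the equivalence of Proposition~\ref{p:stalks}.\ref{i:alexdrov=mod}, recalling that this equivalence is exact, that $\wt{\kk[Q]}=\jmath^{-1}\kk_{Q^\con}$ is the constant sheaf, and hence $\jmath_*\wt{\kk[Q]}\cong\kk_{Q^\con}$ by Proposition~\ref{p:stalks}.\ref{i:exact-pushforward}.

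First I would organize the complements. Since $D$ is a downset, $U:=Q\minus D$ is an upset and $\kk[D]=\kk[Q]/\kk[U]$. The closure $\oD$ is again a downset, and $Q\minus\oD$ is exactly the interior $U^\circ$ of~$U$ in~$Q^\ord$ (the complement of the closure of $D$ is the interior of $Q\minus D$, and the interior of an upset is again an upset), so $\kk[\oD]=\kk[Q]/\kk[U^\circ]$. The inclusions $\kk[U^\circ]\subseteq\kk[U]\subseteq\kk[Q]$ thus fit into a commutative diagram of $Q$-modules with exact rows
\[
\begin{array}{ccccccccc}
0 &\longrightarrow& \kk[U^\circ] &\longrightarrow& \kk[Q] &\longrightarrow& \kk[\oD] &\longrightarrow& 0\\
  &   & \downarrow   &   & \Vert &   & \downarrow &   & \\
0 &\longrightarrow& \kk[U] &\longrightarrow& \kk[Q] &\longrightarrow& \kk[D] &\longrightarrow& 0,
\end{array}
\]
whose left vertical map comes from $U^\circ\subseteq U$ and whose right one is the natural surjection coming from $D\subseteq\oD$.

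Applying $\wt{(-)}$ and then $\jmath_*$ --- both exact --- turns this into a commutative diagram of sheaves on~$Q^\con$ with exact rows and the identity of~$\kk_{Q^\con}$ in the middle column. Therefore $\jmath_*\wt{\kk[D]}$ and $\jmath_*\wt{\kk[\oD]}$ are the cokernels of the monomorphisms $\jmath_*\wt{\kk[U]}\hookrightarrow\kk_{Q^\con}$ and $\jmath_*\wt{\kk[U^\circ]}\hookrightarrow\kk_{Q^\con}$ respectively ($\jmath_*$ being left exact). Now Proposition~\ref{p:upset} applies: the left vertical map $\jmath_*\wt{\kk[U^\circ]}\to\jmath_*\wt{\kk[U]}$, being $\jmath_*$ of the inclusion $\wt{\kk[U^\circ]}\hookrightarrow\wt{\kk[U]}$, is injective and --- because $U^\circ$ is the interior of~$U$ --- surjective by Proposition~\ref{p:upset}, hence an isomorphism. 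Two isomorphic subsheaves of~$\kk_{Q^\con}$ identified compatibly with their inclusions have isomorphic cokernels, so $\jmath_*\wt{\kk[\oD]}\cong\jmath_*\wt{\kk[D]}$, as asserted.

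The one point I expect to need care is the compatibility invoked at the end: that the isomorphism of Proposition~\ref{p:upset} is realized by the canonical inclusion-induced map $\jmath_*\wt{\kk[U^\circ]}\to\jmath_*\wt{\kk[U]}$, so that the pushed-forward diagram genuinely commutes. This is immediate from the stalkwise nature of the proof of Proposition~\ref{p:upset}, but it should be said explicitly. (Alternatively one can bypass Proposition~\ref{p:upset} and run the analogous stalk computation directly: the stalk of $\jmath_*\wt{\kk[D]}$ at~$q$ is $\varinjlim_{p\in q-Q_+^\circ}\Gamma(p+Q_+^\circ,\wt{\kk[D]})$, with $\Gamma(p+Q_+^\circ,\wt{\kk[D]})=\varprojlim_{u\in p+Q_+^\circ}\kk[D]_u$ equal to~$\kk$ precisely when $(p+Q_+^\circ)\cap D\neq\nothing$ --- the inverse limit collapses because any two points of the open convex set $p+Q_+^\circ$ lying in the downset~$D$ admit a common lower bound still in $(p+Q_+^\circ)\cap D$, obtained by nudging $p$ slightly into $Q_+^\circ$ --- and this nonvanishing condition is unchanged upon replacing $D$ by~$\oD$ since $p+Q_+^\circ$ is open.)
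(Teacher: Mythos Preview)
Your proof is correct. The main argument---deducing the downset case from the upset case by passing to complements and using the exactness of~$\jmath_*$ on the short exact sequence $0\to\kk[U]\to\kk[Q]\to\kk[D]\to 0$---is genuinely different from the paper's approach. The paper instead repeats the direct stalk computation from the proof of Proposition~\ref{p:upset}: the stalk of the pushforward at~$q$ is the direct limit over $p\in q-Q_+^\circ$ of sections over $p+Q_+^\circ$, and for a downset module these sections are~$\kk$ precisely when $p$ lies in the interior of the downset, which is the same for~$D$ and~$\oD$. Your parenthetical alternative is essentially this argument, carried out with more care than the paper provides (in particular, your verification that the inverse limit over $p+Q_+^\circ$ collapses to~$\kk$ via common lower bounds in $(p+Q_+^\circ)\cap D$ is a detail the paper elides). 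The short-exact-sequence route has the virtue of making the downset statement a formal consequence of the upset one together with the already-recorded exactness of~$\jmath_*$, at the mild cost of needing the compatibility check you rightly flag---that the isomorphism of Proposition~\ref{p:upset} is realized by the inclusion-induced map. The paper's direct route is shorter but duplicates the stalk analysis.
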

\begin{proof}
Calculating stalks as in the previous proof, in the case of the
pushforward of the sheaf on~$Q^\ale$ corresponding to a downset
module, the sections over $p + Q_+^\circ$ are~$\kk$ if $p$ lies
interior to the downset and~$0$ otherwise.  The result holds because
the downsets $D$ and~$\oD$ have the same interior.
\end{proof}

\begin{remark}\label{r:interior}
The fundamental difference between Alexandrov and conic topologies
reflected by\hspace{-.9pt} Propositions~\ref{p:upset}
and~\ref{p:downset} is explored in detail by\hspace{-.9pt} Berkouk
and~\mbox{Petit}~\cite{berkouk-petit2019}.
\end{remark}

Here is the main result.  It is little more than a restatement of the
relevant part of Theorem~\ref{t:syzygy-complexes} in the language of
sheaves.

\begin{thm}\label{t:res}
Fix $Q$ satisfying Hypothesis~\ref{h:full-closed-cone}.  If
$\FF^\spot$ is a complex of compactly supported subanalytically
constructible sheaves on~$Q^\ord$ with microsupport in the negative
polar cone~$Q_+^\vee$ then $\FF^\spot$ has a finite subanalytic upset
resolution and a finite subanalytic downset resolution.  If $Q$ is
polyhedral and $\FF^\spot$ is PL, then $\FF^\spot$ has PL such
resolutions.
\end{thm}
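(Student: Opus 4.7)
The plan is to transport $\FF^\spot$ from $Q^\ord$ through the conic and Alexandrov topologies into a bounded complex of $Q$-modules, apply Theorem~\ref{t:syzygy-complexes} to obtain indicator resolutions, and then transport these resolutions back to $Q^\ord$.

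First I pass to modules. By Theorem~\ref{t:microsupport}, the microsupport hypothesis yields an isomorphism $\FF^\spot \cong \iota^{-1}\EE^\spot$ for the complex $\EE^\spot = \iota_*\FF^\spot$ on $Q^\con$. Pulling back along $\jmath \colon Q^\ale \to Q^\con$ gives the complex $\GG^\spot = \jmath^{-1}\EE^\spot$ on $Q^\ale$, which by Proposition~\ref{p:stalks}.\ref{i:alexdrov=mod} corresponds to a complex $M^\spot$ of $Q$-modules. Proposition~\ref{p:stalks}.\ref{i:same-sheaf} identifies each graded piece $M^i_q$ with the stalk $\FF^i_q$.

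Next I verify that $M^\spot$ is tame, and subanalytic (or PL under the polyhedral hypothesis). Constructibility of $\FF^\spot$ supplies a subanalytic triangulation subordinate to $\FF^\spot$; compact support lets me retain only the finitely many simplices meeting the support, treating the complement as one additional region, to obtain a finite subanalytic partition of~$Q$. Lemma~\ref{l:tri-constant} identifies $\Gamma(|\sigma|, \FF^i)$ with each stalk $\FF^i_q$ for $q \in |\sigma|$; combined with the stalk identification of step one, this produces canonical isomorphisms $M^i_q \simto M^i_{q'}$ for $q, q' \in |\sigma|$. For a comparable pair $q \preceq q'$ with $q \in |\sigma|$ and $q' \in |\tau|$, the structure map $M^i_q \to M^i_{q'}$ factors through the restriction $\Gamma(|\sigma|, \FF^i) \to \FF^i_{q'}$ and is therefore independent of the choice of $q \in |\sigma|$, giving the no-monodromy condition. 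Applying the same argument to the morphisms of $M^\spot$ shows the partition is subordinate to the whole complex; a PL triangulation yields a PL partition in the polyhedral setting.

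Third, I apply Theorem~\ref{t:syzygy-complexes} to obtain a finite subanalytic upset resolution $F_\spot \to M^\spot$ and a finite subanalytic downset resolution $M^\spot \to E^\spot$ of $Q$-modules, with PL resolutions in the PL case.

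Finally I transport these back to $Q^\ord$. Viewing $F_\spot$ and $E^\spot$ as sheaves on $Q^\ale$ and pushing forward by $\jmath_*$, exactness of $\jmath_*$ together with $\jmath_*\jmath^{-1}\EE^\spot \cong \EE^\spot$ (Proposition~\ref{p:stalks}.\ref{i:exact-pushforward}) preserves the resolution property, now living on $Q^\con$ with $\EE^\spot$ in the middle. Proposition~\ref{p:upset} then replaces each upset summand $\kk[U]$ by $\kk[U^\circ]$, and Proposition~\ref{p:downset} replaces each $\kk[D]$ by $\kk[\oD]$, without altering the sheaf on $Q^\con$; subanalyticity (and PL-ness) of $U^\circ$ and $\oD$ is inherited from that of $U$ and $D$. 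Pulling back by $\iota^{-1}$ recovers $\FF^\spot$ in the middle of each resolution and converts each conic upset summand into the extension by zero of $\kk$ on the open subanalytic upset $U^\circ \subseteq Q^\ord$, and each conic downset summand into the pushforward from the closed subanalytic downset $\oD \subseteq Q^\ord$, producing precisely the subanalytic upset and downset sheaves of Definition~\ref{d:flasque}.

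The main obstacle I anticipate is the second step: showing that a subanalytic triangulation of $Q$ subordinate to $\FF^\spot$ really does induce a constant subdivision of~$Q$ subordinate to $M^\spot$ in the module sense. This demands not only constancy of stalks along each stratum but also absence of monodromy on structure maps relating comparable points in different strata, which is where Lemma~\ref{l:tri-constant} must be combined with the cross-topology stalk compatibility of Proposition~\ref{p:stalks}.
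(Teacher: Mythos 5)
Your proof is correct and takes essentially the same route as the paper: reduce to the conic topology via Theorem~\ref{t:microsupport}, extract a tame complex of $Q$-modules from a finite subanalytic triangulation subordinate to $\FF^\spot$, apply Theorem~\ref{t:syzygy-complexes}, push the resulting indicator resolution forward through $\jmath_*$, normalize summands via Propositions~\ref{p:upset} and~\ref{p:downset}, and pull back to~$Q^\ord$. The only minor divergences are cosmetic: you route to the $Q$-module through $\jmath^{-1}\EE^\spot$ and Proposition~\ref{p:stalks}.\ref{i:alexdrov=mod}, while the paper reads it off directly from Proposition~\ref{p:stalks}.\ref{i:ord-pulled-back} (these agree by Proposition~\ref{p:stalks}.\ref{i:same-sheaf}); and you make explicit that the complement of the compact support is adjoined as one extra constant region, a point the paper leaves implicit.
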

\begin{proof}
Using Theorem~\ref{t:microsupport}, assume that $\FF^\spot$ is pulled
back to~$Q^\ord$ from~$Q^\con$, say $\FF^\spot = \iota^{-1}\EE^\spot$.
Since $\FF^\spot$ has compact support, any subordinate subanalytic
triangulation (Definition~\ref{d:subanalytic-triangulation}) afforded
by Definition~\ref{d:constructible-sheaf} is necessarily finite
because it is locally finite.  The complex $F^\spot = \bigoplus_{q\in
Q} \FF^\spot_q$ of $Q$-modules that comes from
Proposition~\ref{p:stalks}.\ref{i:ord-pulled-back} is tamed by the
triangulation, which is a constant subdivision
(Definition~\ref{d:constant-subdivision}) because
\begin{itemize}
\item%
simplices are connected, so locally constant sheaves on them are
constant, and

\item%
$\Gamma(|\sigma_p|,\FF^i) \to \FF^i_p \to \FF^i_q \from
\Gamma(|\sigma_q|,\FF^i)$ is locally constant---and hence constant, as
simplices are connected---when $p \preceq q$ in~$Q$.  Here $\sigma_x$
is the simplex containing~$x$, the middle arrow is from
Proposition~\ref{p:stalks}.\ref{i:ord-pulled-back}, and the outer
arrows are the natural isomorphisms from Lemma~\ref{l:tri-constant}.
\end{itemize}
Hence the complex $F^\spot$ of $Q$-modules has resolutions of the
desired sort by \mbox{Theorem}~\ref{t:syzygy-complexes}.  Viewing any
of these resolutions as a complex of sheaves on~$Q^\ale$ via
Proposition~\ref{p:stalks}.\ref{i:alexdrov=mod}, push it forward from
the Alexandrov topology to the conic topology via the exact
functor~$\jmath_*$ in
Proposition~\ref{p:stalks}.\ref{i:exact-pushforward}.  The resulting
complex of sheaves on~$Q^\con$ is a resolution of a complex isomorphic
to~$\EE^\spot$ by Proposition~\ref{p:stalks}.\ref{i:same-sheaf}
and~\ref{p:stalks}.\ref{i:exact-pushforward}.  The upsets or downsets
in the summands of the resolution may as well be assumed open or
closed, respectively, by Propositions~\ref{p:upset}
or~\ref{p:downset}.  The proof is concluded by pulling back the
resolution from~$Q^\con$ to~$Q^\ord$ via the equivalence of
Theorem~\ref{t:microsupport}.
\end{proof}

\renewcommand\thethm{\arabic{section}.\arabic{thm}$'$}
\setcounter{separated}{\value{thm}}
\addtocounter{thm}{-1}
\begin{thm}\label{t:res'}
Fix $Q$ satisfying Hypothesis~\ref{h:full-closed-cone}.  If
$\FF^\spot$ is a complex of compactly supported sheaves in the conic
topology~$Q^\con$ then the following are equivalent.
\begin{enumerate}
\item%
$\FF^\spot$ is constructible (Definition~\ref{d:conic-sheaf}).

\item%
$\FF^\spot$ has a finite subanalytic upset resolution.

\item%
$\FF^\spot$ has a finite subanalytic downset resolution.
\end{enumerate}
The implications 2 $\implies$1 and 3 $\implies$1 do not
require
compact support for~$\FF^\spot$.  If $Q$ is polyhedral and $\FF^\spot$
is PL, then all of these claims hold
with ``PL'' in place of~``subanalytic''.
\end{thm}
\setcounter{thm}{\value{separated}}
\renewcommand\thethm{\arabic{section}.\arabic{thm}}
\begin{proof}
That 1 $\implies$ 2 and 1 $\implies$ 3 follows from
Theorems~\ref{t:res} and~\ref{t:microsupport}.  The opposite
directions are by the definition and foundational results surrounding
constructibility in Definition~\ref{d:constructible-sheaf} and
Remark~\ref{r:constructible-sheaf}.
\end{proof}

\begin{remark}\label{r:flexibility}
While the notion of a sheaf with microsupport contained in the
negative polar cone of~$Q_+$ is equivalent to the notion of a sheaf in
the conic topology, the notion of constructibility has until now only
been available on the microsupport side, where simplices from
arbitrary subanalytic triangulations achieve constancy of the sheaves
in question.  Theorem~\ref{t:res'} makes precise the assertion that
constructibility can be detected entirely with the more rigid conic
topology, without the flexibility of appealing to arbitrary
subanalytic triangulations.
\end{remark}

\begin{remark}\label{r:compact}
Theorem~\ref{t:res} assumes compact support to get finite instead of
locally finite subdivisions.  The application in
Section~\ref{s:strat} to constructible sheaves without any
assumption of compact support yields a locally finite subdivision by
reducing to the case of compact support.
\end{remark}

\begin{remark}\label{r:semialgebraic}
The final sentences of Theorems~\ref{t:res} and~\ref{t:res'} are true
with ``polyhedral'' and ``PL'' all replaced by ``semialgebraic'', with
the same proofs, as long as the definitions of these semialgebraic
concepts in the constructible sheaf setting are made appropriately.
The semialgebraic constructible sheaf versions are not treated here
because they are not relevant to the conjectures proved in
Section~\ref{s:strat}.
\end{remark}

\section{Stratifications from constructible sheaves}\label{s:strat}

\begin{cor}[{\cite[Conjecture~3.20]{kashiwara-schapira2019}}]\label{c:PL}
Fix $Q$ satisfying Hypothesis~\ref{h:full-closed-cone} with~$Q_+$
polyhedral.  If $\FF^\spot$ is a PL object in the derived category of
compactly supported constructible sheaves on~$Q^\ord$ with
microsupport contained in the negative polar cone~$Q_+^\vee$ then the
isomorphism class of~$\FF^\spot$ is represented by a complex that is a
finite direct sum of constant sheaves on bounded polyhedra that are
locally closed in the conic topology.
\end{cor}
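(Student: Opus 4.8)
The plan is to deduce Corollary~\ref{c:PL} from Theorem~\ref{t:res} by starting with the PL downset resolution (or dually the upset resolution) and then collapsing it down to a single complex of the claimed shape. First I would invoke Theorem~\ref{t:res} in its PL incarnation: since $\FF^\spot$ is a PL object with microsupport in $Q_+^\vee$ and compact support, it admits a finite PL subanalytic downset resolution $\FF^\spot \to \DD^\spot$, where each $\DD^i$ is a finite direct sum of pushforwards of rank-one locally constant sheaves on closed PL downsets, and the whole resolution dominates a single PL constant subdivision of~$Q$. The point is that a downset sheaf, being the pushforward to $Q^\con$ of the sheaf on $Q^\ale$ corresponding to a downset module, restricts to a constant sheaf of rank~$0$ or~$1$ on each stratum of a fine enough PL subdivision; so $\DD^\spot$ is already a complex whose terms are sums of sheaves constant on polyhedra.

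The remaining work is to replace this resolution by a single complex of constant sheaves on \emph{bounded, locally closed} polyhedra representing the same isomorphism class in the derived category. Here the key steps are: (i) refine the dominating PL constant subdivision so that its strata are relatively open faces of a finite hyperplane arrangement, exactly as in Remark~\ref{r:PL}, and so that (by compactness of the support) only finitely many of these faces are needed and the rest may be discarded; each such face is a bounded polyhedron that is locally closed in the conic topology once one intersects with the open upset $\OO + Q_+$ picture of Remark~\ref{r:push-and-pull}. (ii) On each such stratum~$\mathfrak{s}$ the complex $\DD^\spot|_{\mathfrak{s}}$ is a bounded complex of finite-dimensional vector spaces with \emph{zero} differential behavior across strata killed by the no-monodromy condition built into ``dominates a constant subdivision''; so over $\mathfrak{s}$ the derived restriction is quasi-isomorphic to its homology, a finite-dimensional graded vector space $H^\bullet_{\mathfrak{s}}$. (iii) Assemble the sheaf $\bigoplus_{\mathfrak{s}} (H^\bullet_{\mathfrak{s}})\otimes \kk[\mathfrak{s}]$, where $\kk[\mathfrak{s}]$ is the constant sheaf on the locally-closed polyhedron~$\mathfrak{s}$ extended by zero; standard constructible-sheaf dévissage (the same triangulated induction on the number of strata that underlies Theorem~8.4.5 of \cite{kashiwara-schapira1990}) shows this direct sum is isomorphic in the derived category to $\DD^\spot$, hence to $\FF^\spot$.

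I expect the main obstacle to be step~(ii)--(iii): showing that the complex really splits as a direct sum \emph{over strata} of shifted constant sheaves, rather than merely admitting a filtration whose graded pieces have this form. One needs the connectedness/no-monodromy hypotheses from Definition~\ref{d:resolutions} to guarantee that the attaching maps between the constant pieces on adjacent strata can be cancelled; concretely, one argues stratum by stratum, peeling off a stratum that is closed in~$S$ (so that extension by zero is exact there), using that a bounded complex of finite-dimensional $\kk$-vector spaces is formal, and checking that the connecting triangle splits because every morphism $\kk[D']\to\kk[D]$ in the resolution is connected, i.e.\ a scalar on the overlap. Once the splitting is established the boundedness and local closedness of the polyhedra are immediate from the refinement in step~(i), and the corollary follows. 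Dually one may run the same argument with the upset resolution; either produces a finite direct sum of constant sheaves on bounded polyhedra locally closed in the conic topology.
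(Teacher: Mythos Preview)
Your proposal takes a far more circuitous route than necessary and contains a genuine gap at the step you yourself flag as the main obstacle.

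The paper's proof is essentially one observation: a closed PL downset is already closed in the conic topology (and an open PL upset is already open in the conic topology), so each summand of the resolution furnished by Theorem~\ref{t:res} is \emph{already} a constant sheaf on a PL set that is locally closed in the conic topology. Nothing further is needed for local closedness. The only remaining issue is boundedness, and that is handled by noting that the resolution has vanishing homology outside the compact support of~$\FF^\spot$, so each indicator summand may be restricted to that support and extended by zero. You overlook this and instead refine to faces of a hyperplane arrangement, which throws away the conic local closedness you had for free.

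The gap is in your step~(iii). You claim that $\DD^\spot$ (hence $\FF^\spot$) is isomorphic \emph{in the derived category} to the direct sum $\bigoplus_{\mathfrak{s}} H^\bullet_{\mathfrak{s}}\otimes \kk[\mathfrak{s}]$ over the hyperplane-arrangement strata. This is a formality assertion for constructible sheaves that is false in general: the attaching triangles between strata need not split, and neither the connectedness of the morphisms in Definition~\ref{d:resolutions} nor the no-monodromy condition forces them to. Already for $Q=\RR$ with the standard order, the constant sheaf on a half-open interval $(a,b]$ does not decompose in the derived category as a sum of the constant sheaves on the open cell $(a,b)$ and the point~$\{b\}$; the connecting morphism in the triangle $\kk_{(a,b)}\to\kk_{(a,b]}\to\kk_{\{b\}}\to{}$ is nonzero. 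Your d\'evissage yields a filtration whose associated graded is the direct sum you want, but not the splitting. The corollary asks only that the \emph{terms} of a representing complex be direct sums of such constant sheaves, not that $\FF^\spot$ itself split in the derived category---and the resolution from Theorem~\ref{t:res} already provides exactly that.
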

\begin{proof}
The statement would directly be a special case of Theorem~\ref{t:res}
were it not for the boundedness hypothesis on the polyhedra, since
either a PL upset or PL downset resolution would satisfy the
conclusion.  That said, boundedness is easy to impose:
since~$\FF^\spot$ has compact support, and the resolution has
vanishing homology outside of the support of~$\FF^\spot$, each upset
or downset sheaf can be restricted to the support of~$\FF^\spot$ and
extended by~$0$.
\end{proof}

As in Theorem~\ref{t:res'}, Corollary~\ref{c:PL} can be restated using
constructible conic sheaves.

\renewcommand\thethm{\arabic{section}.\arabic{thm}$'$}
\addtocounter{thm}{-1}
\begin{cor}\label{c:PL'}
Fix $Q$ satisfying Hypothesis~\ref{h:full-closed-cone} with~$Q_+$
polyhedral.  $\FF^\spot$ is a PL object in the derived category of
compactly supported constructible conic sheaves if and only if the
isomorphism class of~$\FF^\spot$ is represented by a complex that is a
finite direct sum of constant sheaves on bounded polyhedra that are
locally closed in the conic topology.
\end{cor}
\renewcommand\thethm{\arabic{section}.\arabic{thm}}

\begin{cor}[{\cite[Conjecture\,3.17]{kashiwara-schapira2017}}]\label{c:strat}
Fix $Q$ satisfying Hypothesis~\ref{h:full-closed-cone}.  If a
compactly supported derived sheaf with microsupport in the negative
polar cone~$Q_+^\vee$ is subanalytically constructible, then its
support has a subordinate conic stratification.
\end{cor}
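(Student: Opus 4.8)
The plan is to deduce the corollary from Theorem~\ref{t:res} by reading off a conic stratification of the support directly from the finite subanalytic upset resolution that it produces.

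First I would represent the given derived sheaf by a bounded complex $\FF^\spot$ of constructible sheaves on~$Q^\ord$ (Remark~\ref{r:constructible-sheaf}); its support $S=\bigcup_j\operatorname{supp}H^j(\FF^\spot)$ is then a compact subanalytic subset, closed in~$Q^\ord$, and indeed closed in~$Q^\con$ once $\FF^\spot$ is replaced by the isomorphic object pulled back from $Q^\con$ as in the proof of Theorem~\ref{t:res}. That theorem supplies a finite subanalytic upset resolution $\UU^\spot\to\FF^\spot$: a quasi-isomorphism each of whose terms is a finite direct sum of the subanalytic upset sheaves attached to finitely many open subanalytic upsets $U_1,\dots,U_n\subseteq Q^\ord$; write $\kk_{U_i}$ for the sheaf attached to~$U_i$. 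For each $A\subseteq\{1,\dots,n\}$ put
$$
  C_A \;=\; \Bigl(\,\bigcap_{i\in A}U_i\Bigr)\,\cap\,\Bigl(\,\bigcap_{i\notin A}(Q\smallsetminus U_i)\Bigr),
$$
so the nonempty $C_A$ form a finite subanalytic partition of~$Q$. Since $\bigcap_{i\in A}U_i$ is an open upset---hence open in the conic topology---and each $Q\smallsetminus U_i$ is a closed downset---hence closed in the conic topology---every $C_A$ is locally closed in the conic topology, which is exactly the shape a conic stratum must have (Definition~\ref{d:stratification}).

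Next I would check subordinacy. The set $C_A$ lies in $U_i$ when $i\in A$ and is disjoint from~$U_i$ when $i\notin A$, so restricting each $\kk_{U_i}$ to the locally closed set~$C_A$ gives either the rank-one constant sheaf on~$C_A$ or~$0$; as restriction is exact, $\UU^\spot|_{C_A}$ is thus a bounded complex of finite-rank constant sheaves, and its cohomology sheaves---equal to $H^j(\UU^\spot)|_{C_A}\cong H^j(\FF^\spot)|_{C_A}$---are locally constant of finite rank. Moreover the resolution may be taken with connected differential components (Definition~\ref{d:connected-homomorphism}), since it is built from the connected resolutions of Theorem~\ref{t:syzygy-complexes}, so the restricted differential is a \emph{constant} matrix and each $H^j(\FF^\spot)|_{C_A}$ has constant rank; hence each cell~$C_A$ is either contained in~$S$ or disjoint from it, and $S$ is a union of cells. (If one prefers not to invoke connectedness, replace each cell by its connected components, of which finitely many lie in the compact set~$S$; being clopen in the subanalytic set $C_A$, these are again subanalytic and locally closed in the conic topology.) Declaring the cells contained in~$S$ to be the strata now yields a finite---hence locally finite---family of pairwise disjoint subanalytic sets, locally closed in the conic topology, partitioning~$S$, over each of which the homology of~$\FF^\spot$ is locally constant of finite rank; and since each stratum lies in the closed set~$S$, the union of the stratum closures is~$S$. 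By Definition~\ref{d:stratification} this is a conic stratification of~$S$ subordinate to~$\FF^\spot$, which is the assertion. (The downset resolution of Theorem~\ref{t:res} yields a second such stratification in the same way.)

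The mathematical substance has already been packaged in Theorem~\ref{t:res}; what remains is bookkeeping. The only points requiring attention are that the complement of an open subanalytic upset is a closed subanalytic downset---so that the cells, and hence the strata, are locally closed in the conic topology in the precise sense of Definition~\ref{d:stratification}---that restricting the upset resolution to a single cell turns it into a (constant) complex of finite-rank constant sheaves---so that the homology of~$\FF^\spot$ is locally constant there---and that~$S$ is closed and decomposes as a union of cells---so that the strata lie in~$S$ and the union of their closures equals~$S$. I expect none of these to present a genuine obstacle: the real work has been done upstream in Theorem~\ref{t:res}, and everything here is a translation of its conclusion into the language of conic stratifications.
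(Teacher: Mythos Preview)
Your proposal is correct and follows essentially the same route as the paper: apply Theorem~\ref{t:res} to obtain a finite subanalytic upset resolution, take the common refinement of~$Q$ induced by the upsets $U_i$ and their complementary closed downsets, observe that each resulting cell is subanalytic and locally closed in the conic topology with constant homology, and keep those cells on which the homology is nonzero. The paper's proof additionally opens with a paragraph reducing the original Kashiwara--Schapira conjecture (stated without compact support) to the compactly supported case via \cite[Theorem~3.17]{kashiwara-schapira2018}; since the corollary as stated already assumes compact support, your omission of this step is harmless, and your more explicit justification---via connectedness of the differentials---of why the homology is genuinely constant (not merely locally constant) on each cell is a detail the paper leaves implicit.
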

\begin{proof}
Part~(ii) in the proof of \cite[Theorem~3.17]{kashiwara-schapira2018}
reduces to the case where the support of the given derived sheaf is
compact.  The argument is presented in the case where $Q$ is
polyhedral and the derived sheaf is~PL, but the argument works
verbatim for $Q$ satisfying Hypothesis~\ref{h:full-closed-cone},
without any polyhedral or PL assumptions, because the requisite lemma,
namely \cite[Lemma~3.5]{kashiwara-schapira2018}---and indeed, all of
\cite[\S3.1]{kashiwara-schapira2018}---is stated and proved in
this non-polyhedral generality.  So henceforth assume the given
derived sheaf has compact support.

Remark~\ref{r:constructible-sheaf} allows the assumption that the
given derived sheaf is represented by a complex~$\FF^\spot$ of
constructible sheaves.  Theorem~\ref{t:res} produces a subanalytic
indicator resolution, which for concreteness may as well be an upset
resolution.  Each upset that appears as a summand in the resolution
partitions~$Q$ into the upset itself, which is open subanalytic, and
its complement, which is a closed subanalytic downset.  The common
refinement of the partitions induced by the finitely many open
subanalytic upsets in the resolution and their closed subanalytic
downset complements is a partition of~$Q$ into finitely many strata
such that
\begin{itemize}
\item%
each stratum is subanalytic and locally closed in the conic topology,
and

\item%
the restriction of~$\FF^\spot$ to each stratum has constant homology.
\end{itemize}
The strata with nonvanishing homology form the desired conic
stratification.
\end{proof}

As before, Corollary~\ref{c:strat} can be restated in terms of
constructible conic sheaves.

\renewcommand\thethm{\arabic{section}.\arabic{thm}$'$}
\addtocounter{thm}{-1}
\begin{cor}\label{c:strat'}
Fix $Q$ satisfying Hypothesis~\ref{h:full-closed-cone}.  The support
of any compactly supported constructible derived conic sheaf has a
subordinate conic stratification.
\end{cor}
\renewcommand\thethm{\arabic{section}.\arabic{thm}}

\begin{remark}\label{r:lambda}
The reference in \cite[Conjecture\,3.17]{kashiwara-schapira2017} to a
cone~$\lambda$ contained in the interior of the positive cone union
the origin appears to be unnecessary, since (in the notation there)
any $\gamma$-stratification is automatically a
$\lambda$-stratification by
\cite[Definition~3.15]{kashiwara-schapira2017} and the fact that
$\lambda \subseteq \gamma$.
\end{remark}

\bigskip
\setcounter{tocdepth}{-1}
\subsection*{Conflict of interest}\label{sub:conflict}
\setcounter{tocdepth}{2}

The author states that there is no conflict of interest.

\addtocontents{toc}{\protect\setcounter{tocdepth}{2}}


\end{document}